\newtheorem{thm}{Theorem}[section]
\newaliascnt{theo}{thm}
\newtheorem{theo}[theo]{Theorem}
\newaliascnt{cor}{thm}
\newtheorem{cor}[cor]{Corollary}
\newaliascnt{prop}{thm}
\newtheorem{prop}[prop]{Proposition}
\newaliascnt{lem}{thm}
\newtheorem{lem}[lem]{Lemma}
\newaliascnt{conj}{thm}
\newaliascnt{que}{thm}
\newaliascnt{ass}{thm}
\newaliascnt{defn}{thm}
\newtheorem{defn}[defn]{Definition}
\theoremstyle{remark}
\newaliascnt{rem}{thm}
\newtheorem{rem}[rem]{Remark}
\theoremstyle{definition}
\newtheorem{exmp}[thm]{Example}
\newtheorem{notn}[thm]{Notation}
\newcommand{\Z}{\mathbb{Z}\xspace}
\newcommand{\Q}{\mathbb{Q}\xspace}
\newcommand{\G}{\mathbb{G}\xspace}
\DeclareMathOperator{\Spec}{Spec}
\DeclareMathOperator{\res}{res}
\DeclareMathOperator{\alb}{alb}
\DeclareMathOperator{\img}{Im}
\DeclareMathOperator{\Hom}{Hom}
\DeclareMathOperator{\Gal}{Gal}
\DeclareMathOperator{\Alb}{Alb}
\DeclareMathOperator{\Ext}{Ext}
\DeclareMathOperator{\ab}{ab}
\let\c@equation\c@theo
\numberwithin{equation}{section}
\title{A Tate duality theorem for local Galois symbols II; \\ The semi-abelian case}
\author[*]{Evangelia Gazaki*} \address[*]{\normalfont Department of Mathematics, University of Michigan, 3823 East Hall, 530 Church St., Ann Arbor, MI, 48109, USA. Email: \texttt{gazaki@umich.edu}}
\begin{document}

\maketitle

\begin{abstract} This paper is a continuation to \cite{Gazaki2017}. For every integer $n\geq 1$, we consider the generalized Galois symbol $K(k;G_1,G_2)/n\xrightarrow{s_n} H^2(k,G_1[n]\otimes G_2[n])$, where $k$ is a finite extension of $\Q_p$, $G_1,G_2$ are semi-abelian varieties over $k$ and $K(k;G_1,G_2)$ is the Somekawa K-group attached to $G_1, G_2$. Under some mild assumptions, we describe the exact annihilator of the image of $s_n$ under the Tate duality perfect pairing, $H^2(k,G_1[n]\otimes G_2[n])\times H^0(k,\Hom(G_1[n]\otimes G_2[n],\mu_n))\rightarrow\Z/n$. An important special case is when both $G_1, G_2$ are abelian varieties  with split semistable reduction. In this case we prove a finiteness result, which gives an application to zero-cycles on abelian varieties and products of curves. 

\end{abstract}

\section{Introduction}
In the early 90's Somekawa (\cite{Somekawa1990}), following a suggestion of K. Kato, defined a new K-group, $K(k;G_1,\cdots,G_r)$ attached to a finite family of semi-abelian varieties over a field $k$. This group is a generalization of the Milnor K-group, $K_r^M(k)$, of a field $k$. In fact, in the most basic case when $G_1=\cdots=G_r=\G_m$, Somekawa proved an isomorphism \[K_r^M(k)\simeq K(k;\G_m,\cdots,\G_m).\]  Moreover, similarly to the case of the classical Milnor K-group, for every integer $n\geq 1$ invertible in $k$, there is a well-defined homomorphism to Galois cohomology,
\[s_n:K(k;G_1,\cdots,G_r)/n\rightarrow H^r(k,G_1[n]\otimes\cdots\otimes G_r[n]),\] known as the \textit{generalized Galois symbol}. The map $s_n$ is essentially constructed from the connecting homomorphisms, $G_i(k)/n\hookrightarrow H^1(k,G_i[n])$, by taking cup products.  

The goal of this article is to study the image of this map, $s_n$, when the base field $k$ is a finite extension of the $p$-adic field $\Q_p$ and when $r=2$. Note that because every such field has cohomological dimension two, the only interested cases are when $r=1,2$. 

The study of Galois symbols is classical and dates back to the work of Tate (\cite{Tate1976}). Obtaining information on either the image or the kernel of these maps is a problem of very high difficulty. We indicatively mention the Bloch-Kato conjecture (\cite{Bloch/Kato1986}), which was  proved only very recently by Voevodsky (\cite{Voevodsky2011}). The reason people study these Galois symbols is because they carry deep arithmetic information. For example, in specific cases they can be realized as reciprocity maps of geometric class field theory or as cycle maps from algebraic cycles to \'{e}tale cohohomology. 
 
 When the base field $k$ is a finite extension of $\Q_p$ the results concerning the image of $s_n$ vary a lot depending on the coordinates of the Somekawa K-group, $K(k;G_1,\cdots,G_r)$. Here are some significant examples.
 \begin{exmp}\label{example1} The classical Galois symbol, $K_2^M(k)/n\xrightarrow{s_n} H^2(k,\mu_n^{\otimes 2})$, is always an isomorphism for every base field $k$. This is known as the Merkurjev-Suslin theorem (\cite{Merkurjev/Suslin1982}), but when $k$ is $p$-adic this result is due to Tate (\cite{Tate1976}).  
 \end{exmp}
 \begin{exmp}\label{example2} Bloch (\cite{Bloch1981}) studied the Galois symbol,
 \[K(k;J(C),\G_m)/n\xrightarrow{s_n}H^2(k,J(C)\otimes\mu_n),\] where $J(C)$ is the Jacobian of a smooth projective curve over a $p$-adic field $k$. The study of this map was essential to class field theory of curves. When $C$ has good reduction, Bloch showed that $s_n$ is surjective, a result which is no longer true when $C$ has bad reduction (\cite{Saito1985}).  
 \end{exmp}
 \begin{exmp}\label{example3} When $A,B$ are abelian varieties, the local Galois symbol 
\[K(k;A,B)/n\xrightarrow{s_n}H^2(k,A[n]\otimes B[n])\] has been studied extensively by numerous authors (\cite{Raskind/Spiess2000}, \cite{Yamazaki2005}, \cite{Murre/Ramakrishnan2009}, \cite{hiranouchi/hirayama}.  
In this case, it appeared hard to give a definitive answer regarding the image of the Galois symbol, since all results depended heavily on the specific reduction type of $A$, $B$ as well as on the ramification of the field $k$. 
 
In \cite{Gazaki2017} we introduced a new approach to this study,  the use of local Tate duality. Namely, we proved the following theorem.
\begin{theo}\label{goodred} (\cite[Theorem 1.1]{Gazaki2017}) Let $k$ be a finite extension of the $p$-adic field $\Q_p$ with $p\geq 5$ and let $G_k$ be the Galois group of $k$. Let $A,B$ be abelian varieties over $k$ with good reduction. Let $B^\star$ be the dual abelian variety of $B$ and $\mathcal{A},\mathcal{B}^\star$ the N\'{e}ron models of $A,B^\star$ respectively. For a positive integer $n\geq 1$, we consider the Tate duality perfect pairing,
\[\langle ,\rangle:H^2(k,A[n]\otimes B[n])\times\Hom_{G_k}(A[n],B^\star[n])\rightarrow\Z/n.\] The exact annihilator under Tate duality of the image, \[\img(K(k;A,B)\xrightarrow{s_n} H^2(k,A[n]\otimes B[n]))\] of the Galois symbol  consists of those homomorphisms $g:A[n]\rightarrow B^\star[n]$ that lift to a homomorphism $\tilde{g}:\mathcal{A}[n]\rightarrow\mathcal{B}^\star[n]$ of the corresponding finite flat group schemes over $\Spec(\mathcal{O}_k)$. When the integer $n$ is coprime to $p$, the assumption $p\geq 5$ can be dropped. 
\end{theo}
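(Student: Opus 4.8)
The plan is to convert the Tate duality statement into a statement about the preservation of the ``finite parts'' of local cohomology, and then into an integral lifting problem for finite flat group schemes. First I would fix the identification of the dual module. The Weil pairings for $A$ and $B$ furnish a canonical isomorphism $\Hom(A[n]\otimes B[n],\mu_n)\cong\Hom(A[n],B^\star[n])$, so that $H^0(k,\Hom(A[n]\otimes B[n],\mu_n))=\Hom_{G_k}(A[n],B^\star[n])$ and the Tate pairing acquires the stated shape. Next, using that $s_n$ is built from the Kummer coboundaries $\delta_A\colon A(L)/n\hookrightarrow H^1(L,A[n])$ and $\delta_B\colon B(L)/n\hookrightarrow H^1(L,B[n])$ by cup product, together with the Mackey/Somekawa presentation of $K(k;A,B)$, the image $\img(s_n)$ is generated by the classes $\mathrm{cor}_{L/k}(\delta_A(a)\cup\delta_B(b))$ as $L/k$ ranges over the finite subextensions and $a\in A(L)$, $b\in B(L)$.

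The second step unwinds the pairing. By the projection formula relating $\mathrm{cor}_{L/k}$ and $\res_{L/k}$ for the cup product, and the compatibility of the invariant maps with corestriction, for $g\in\Hom_{G_k}(A[n],B^\star[n])$ one gets
\[
\langle \mathrm{cor}_{L/k}(\delta_A(a)\cup\delta_B(b)),\, g\rangle
= \langle (\res_{L/k}g)_{*}\,\delta_A(a),\ \delta_B(b)\rangle_{L},
\]
where the right-hand side is the local Tate cup product $H^1(L,B^\star[n])\times H^1(L,B[n])\to\Z/n$ (legitimate since $B^\star[n]=B[n]^\vee(1)$). Hence $g$ annihilates $\img(s_n)$ if and only if, for every finite $L/k$ and every $a\in A(L)$, the class $(g_L)_{*}\delta_A(a)$ is orthogonal to $\delta_B(B(L)/n)$, where $g_L=\res_{L/k}g$.

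At this point two standard facts about abelian varieties with good reduction enter. Writing $H^1_f(L,M)$ for the image of the flat cohomology $H^1_{\mathrm{fppf}}(\mathcal{O}_L,\cdot)$ of the relevant finite flat group scheme, one has $\delta_B(B(L)/n)=H^1_f(L,B[n])$ (the Kummer map factors through the $n$-torsion of the N\'eron model), and the finite parts of dual modules are \emph{exact} annihilators, $H^1_f(L,B[n])^{\perp}=H^1_f(L,B^\star[n])$, which follows from Tate duality together with an order count via the local Euler characteristic formula. Combining this with the range $\delta_A(A(L)/n)=H^1_f(L,A[n])$, the annihilator condition becomes: $(g_L)_{*}H^1_f(L,A[n])\subseteq H^1_f(L,B^\star[n])$ for every finite $L/k$; that is, \emph{$g$ preserves the finite parts over all finite extensions}.

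It then remains to prove that $g$ preserves finite parts over all $L$ if and only if $g$ lifts to a homomorphism $\tilde g\colon\mathcal{A}[n]\to\mathcal{B}^\star[n]$ over $\Spec(\mathcal{O}_k)$. The implication ($\Leftarrow$) is formal: base-changing $\tilde g$ to $\mathcal{O}_L$ (good reduction persists, so the N\'eron models commute with the base change) and applying functoriality of flat cohomology sends $\img H^1_{\mathrm{fppf}}(\mathcal{O}_L,\mathcal{A}[n])$ into $\img H^1_{\mathrm{fppf}}(\mathcal{O}_L,\mathcal{B}^\star[n])$. The implication ($\Rightarrow$), reconstructing the integral homomorphism from the cohomological condition, is the crux and is exactly where $p\geq 5$ is used. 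When $n$ is prime to $p$ the schemes $\mathcal{A}[n],\mathcal{B}^\star[n]$ are finite \'etale, $H^1_f=H^1_{\mathrm{ur}}$, and any $G_k$-equivariant $g$ is automatically a morphism of the \'etale models since inertia acts trivially; thus every $g$ lifts and no restriction on $p$ is needed. For $p\mid n$ the connected parts are genuinely non-\'etale, and I would proceed by taking the scheme-theoretic closure of the graph of $g$ inside $\mathcal{A}[n]\times\mathcal{B}^\star[n]$ and showing that the finite-part condition, tested over sufficiently many $L$ (in particular after enlarging the residue field and after ramified base change), forces the first projection of this closure onto $\mathcal{A}[n]$ to be an isomorphism, equivalently that $g$ respects the connected--\'etale filtrations, invoking the classification results for finite flat group schemes over $\mathcal{O}_k$ available when $p\geq 5$. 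I expect this last step, controlling the connected part, to be the main technical obstacle.
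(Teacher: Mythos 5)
Your reduction of the duality statement to a ``finite parts'' condition is sound as far as it goes: the projection-formula computation of $\langle \mathrm{cor}_{L/k}(\delta_A(a)\cup\delta_B(b)),g\rangle$, the identification $\delta_B(B(L)/n)=H^1_f(L,B[n])$ for good reduction, and the fact that $H^1_f(L,B[n])$ and $H^1_f(L,B^\star[n])$ are exact annihilators (Tate) are all correct, and they do show that $g$ kills $\img(s_n)$ if and only if $(\res_{L/k}g)_*$ carries $H^1_f(L,A[n])$ into $H^1_f(L,B^\star[n])$ for every finite $L/k$. But the proof then stops exactly where the theorem actually lives. The equivalence of this cohomological condition with the existence of an integral lift $\tilde g\colon\mathcal{A}[n]\to\mathcal{B}^\star[n]$ is only formal in the direction you call ($\Leftarrow$); for ($\Rightarrow$) you offer a sketch (scheme-theoretic closure of the graph, ``testing over sufficiently many $L$'', unspecified ``classification results for $p\geq 5$'') with no argument for why the finite-part condition on $H^1$ forces the first projection of the graph closure to be an isomorphism. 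This is not a routine verification: when $p\mid n$ the group schemes have nontrivial connected parts, a $G_k$-map of generic fibers need not extend integrally, and no amount of unramified or tamely ramified base change detects the connected part through $H^1_f$ alone. Since this implication is the entire content of the theorem and is precisely where the hypothesis $p\geq 5$ must enter, the proposal has a genuine gap.

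For comparison: the present paper does not reprove this statement but imports it from the earlier article, and it records that the proof there runs along completely different lines --- first relating $K(k;A,B)$ to the Albanese kernel of $CH_0(A\times B)$ and invoking the Saito--Sato theorem on the cycle map to \'etale cohomology, and then using integral comparison theorems for $p$-adic cohomology (this is where $p\geq 5$, or alternatively $(n,p)=1$, is needed). None of that machinery appears in your outline, and without a substitute for it your final step does not close.
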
 
 \end{exmp}


K. Kato visioned that a more general Tate duality theorem should exist about the image of the Galois symbol 
\[K(k;G_1,G_2)/n\xrightarrow{s_n}H^2(k,G_1[n]\otimes G_2[n])\] for $G_1, G_2$ general semi-abelian varieties. Such a theorem should potentially explain all the different phenomena that appeared in \autoref{example1}, \autoref{example2}, \autoref{example3} and at the same time take also into consideration cases of \textit{bad reduction}. 
This is precisely the goal of the present article. 
\subsection{A filtration on the torsion points}\label{conditions}
The first step will be to extend \autoref{goodred} to abelian varieties with \textit{split semistable reduction} (see \autoref{semistabledefn}).  In order to state our Tate duality theorem, we need to briefly recall some facts  about this type of reduction. More details will be given in \autoref{background} of the paper.

When the abelian variety $A$ has split semistable reduction, the group scheme $\mathcal{A}[n]$ is in general not finite over $\Spec(\mathcal{O}_k)$. In turn, it has a filtration,
\begin{eqnarray}\label{fil}\mathcal{A}[n]\supset\mathcal{A}[n]^f\supset
\mathcal{A}[n]^t\supset 0,\end{eqnarray} with the \textit{finite piece}, $\mathcal{A}[n]^f$, being defined as the maximal finite subgroup scheme of $\mathcal{A}[n]$. The subgroup $\mathcal{A}[n]^t$ is called the \textit{toric piece} of $\mathcal{A}[n]$ and it is constructed using the maximal formal sub-torus of the formal group of $A$. This filtration has the following properties. 
\begin{itemize}
\item $\mathcal{A}[n]^t\simeq\mu_n^{\oplus r}$ for some $r\geq 0$,
\item $\mathcal{A}[n]^f/\mathcal{A}[n]^t\simeq\mathcal{C}[n]$, where $\mathcal{C}$ is an abelian scheme over $\Spec(\mathcal{O}_k)$. 
We will denote this quotient by $\mathcal{A}[n]^{\ab}$ and will call it \textit{the abelian piece} of $\mathcal{A}[n]$. 
\end{itemize} This filtration is classical and has been studied in \cite{SGA7(I)} and \cite{Faltings/Chai1990}. Furthermore, it induces a filtration of the $G_k$-module $A[n]$ by taking
\[A[n]^\bullet=\mathcal{A}[n]^\bullet(\overline{k}),\;\;\;\text{where }\bullet=f, t.\] 

 In order to extend our theorem to semi-abelian varieties, in \autoref{background} we construct a \textit{finite-toric}  filtration, $G[n]\supset G[n]^f\supset G[n]^t\supset 0$, with similar properties for the $G_k$-module $G[n]$, where $G$ is a semi-abelian variety over $k$ fitting into a short exact sequence 
\[0\rightarrow T\rightarrow G\rightarrow A\rightarrow 0,\] with $T$ a split torus and $A$ an abelian variety over $k$ with split semistable reduction. The quotient $G[n]^f/G[n]^t$  has similar properties to $A[n]^f/A[n]^t$, namely it is isomorphic to $D[n]$, where $D$ is the generic fiber of an abelian scheme $\mathcal{D}$  over $\Spec(\mathcal{O}_k)$. A similar filtration can be defined for the Cartier dual,   $\Hom(G[n],\mu_n)$.
\subsection{Statement of the main Theorem}
The main theorem of this article can be summarized as follows. 
\begin{theo}\label{mainsemiab} Let $k$ be a finite extension of the $p$-adic field $\Q_p$ with $p\geq 5$ and let $G_k$ be the Galois group of $k$. Let $G_1, G_2$ be semi-abelian varieties over $k$ such that for $i=1,2$ we have a short exact sequence \[0\rightarrow T_i\rightarrow G_i\rightarrow A_i\rightarrow 0,\] with $T_i$ a split torus and $A_i$ an abelian variety over $k$ with split semistable reduction. For a positive integer $n\geq 1$, we consider the generalized Galois symbol
\[s_n:K(k;G_1,G_2)/n\rightarrow H^2(k,G_1[n]\otimes G_2[n]).\] 
The orthogonal complement of the image, $\img(s_n)$ under the Tate duality pairing,
\[\langle ,\rangle:H^2(k,G_1[n]\otimes G_2[n])\times\Hom_{G_k}(G_1[n],\Hom(G_2[n],\mu_n))\rightarrow\Z/n,\] consists of those homomorphisms $g:G_1[n]\rightarrow\Hom(G_2[n],\mu_n)$ that satisfy the properties:
\begin{itemize}
\item $g$ preserves the aforementioned filtration.
\item The induced morphism $g: G_1[n]^{f}/G_1[n]^t\rightarrow \Hom(G_2[n],\mu_n)^{f}/\Hom(G_2[n],\mu_n)^{f}$ lifts to a homomorphism $\tilde{g}$ of the corresponding finite flat group schemes over $\Spec(\mathcal{O}_k)$. 
\end{itemize} When the integer $n$ is coprime to $p$, the assumption $p\geq 5$ can be dropped. 
\end{theo}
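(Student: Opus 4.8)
The plan is to translate the annihilator condition into a statement about Kummer images under local Tate duality, and then to run a dévissage along the finite--toric filtration, reducing the only genuinely obstructed piece to \autoref{goodred}. First I would exploit that $\img(s_n)$ is generated by the symbols $s_n(\{a,b\})=\delta_1(a)\cup\delta_2(b)$, where $\delta_i\colon G_i(k)/n\hookrightarrow H^1(k,G_i[n])$ is the connecting (Kummer) map. Viewing $g\colon G_1[n]\to\Hom(G_2[n],\mu_n)$ as a pairing $G_1[n]\otimes G_2[n]\to\mu_n$, it induces $g_*\colon H^1(k,G_1[n])\to H^1(k,\Hom(G_2[n],\mu_n))$, and by functoriality of the cup product one has $\langle s_n(\{a,b\}),g\rangle=g_*(\delta_1(a))\cup\delta_2(b)\in H^2(k,\mu_n)=\Z/n$, the last cup product being the Tate duality pairing of the finite module $G_2[n]$ with its Cartier dual. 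Hence $g$ annihilates $\img(s_n)$ if and only if $g_*\big(\img(\delta_1)\big)$ lands in the orthogonal complement of $\img(\delta_2)$. The next input is local duality for semi-abelian varieties (the $1$-motive refinement of Tate's duality for abelian varieties), which identifies this orthogonal complement with the image $\img(\delta_2^\ast)$ of the Kummer map of the dual, whose $n$-torsion is $\Hom(G_2[n],\mu_n)$. The whole theorem thus reduces to the single containment
\[ g_*\big(\img(\delta_1)\big)\subseteq\img(\delta_2^\ast). \]

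Next I would compute both Kummer images on the associated graded of the filtration $G_i[n]\supset G_i[n]^f\supset G_i[n]^t\supset 0$ and its Cartier dual. On the toric step $G_i[n]^t\simeq\mu_n^{\oplus r}$ the map $\delta_i$ is the $\G_m$-Kummer map, whose image is all of $H^1(k,\mu_n^{\oplus r})\simeq(k^\ast/n)^{\oplus r}$; on the abelian step $G_i[n]^f/G_i[n]^t\simeq\mathcal{D}_i[n]$ the good--reduction theory identifies the image with the finite--flat (unramified) classes coming from $H^1(\Spec\mathcal{O}_k,\mathcal{D}_i[n])$; and on the étale quotient the image is the unramified subgroup of $H^1(k,(\Z/n)^{\oplus e})$. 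Because Cartier duality interchanges the toric and étale steps, the same computation describes $\img(\delta_2^\ast)$: its toric part is all of the corresponding $H^1$, while its abelian and étale parts are the finite--flat and unramified subgroups. The dichotomy to retain is that the toric Kummer classes are ramified and fill the whole $H^1$, whereas the abelian and étale Kummer classes form unramified proper subgroups.

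With these descriptions the proof splits into two implications. For necessity of filtration preservation I would argue that if $g$ did not map $G_1[n]^t$ into $\Hom(G_2[n],\mu_n)^t$, then $g_*$ would carry some ramified toric class of $\img(\delta_1)$ into the abelian or étale part of $H^1(k,\Hom(G_2[n],\mu_n))$, where $\img(\delta_2^\ast)$ consists only of unramified classes, contradicting the containment; preservation of the finite step $G_1[n]^f$ is forced by the analogous argument applied to the abelian (flat) classes. Once $g$ preserves the filtration, the containment decouples on the associated graded: the toric component imposes no condition, since the target Kummer image is all of $H^1$ (equivalently, the Merkurjev--Suslin surjectivity of \autoref{example1}); the étale component imposes no condition, since any Galois homomorphism carries unramified classes to unramified classes; and the abelian component $\bar g\colon\mathcal{D}_1[n]\to\mathcal{D}_2^\star[n]$ is constrained to send finite--flat classes to finite--flat classes, which by \autoref{goodred} happens exactly when $\bar g$ lifts to a homomorphism of finite flat group schemes over $\Spec\mathcal{O}_k$. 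This yields the two bullet points. The prime $p$ enters only through \autoref{goodred} (finite flat group scheme theory when $p\mid n$), so the hypothesis $p\geq 5$ is superfluous when $n$ is coprime to $p$.

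I expect the dévissage to be the main obstacle. The groups $H^2(k,G_1[n]\otimes G_2[n])$ and $\Hom_{G_k}(G_1[n],\Hom(G_2[n],\mu_n))$ are only filtered, not graded, so one must control the connecting maps in the long exact sequences attached to the tensor product of the two three--step filtrations and check that the extension classes do not create spurious annihilators. The mixed toric--abelian and toric--étale terms, where ramified and unramified Kummer classes interact through the cup product (and where the phenomena of \autoref{example2} are visible), are the most delicate point, and it is precisely here that the split semistable hypothesis --- ensuring $G_i[n]^t\simeq\mu_n^{\oplus r}$ with the cocharacter lattice split --- is what makes the computation of the Kummer images and their orthogonality clean enough to conclude.
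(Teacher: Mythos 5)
Your overall shape --- d\'{e}vissage along the finite--toric filtration, the toric steps imposing no condition by Merkurjev--Suslin-type surjectivity, and the abelian graded piece reducing to \autoref{goodred} --- correctly locates where the two conditions in the theorem come from, but the plan has a concrete gap at its foundation: the computation of the Kummer image on the toric piece of the \emph{abelian} part. The subquotient $A_i[n]^t$ (hence $G_i[n]^t/T_i[n]$) is not the $n$-torsion of an algebraic subtorus of $G_i$; it comes from the formal torus of the semistable degeneration, so the assertion that ``on the toric step $\delta_i$ is the $\G_m$-Kummer map, whose image is all of $H^1(k,\mu_n^{\oplus r})$'' is unjustified there. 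Producing points of $G_i$ whose Kummer classes account for this piece is exactly what the paper's $p$-adic uniformization step does: one passes to the Raynaud extension $A_i^{\sharp\circ}$, which \emph{does} contain a genuine split torus, and proves the surjection $(A^{\sharp\circ}\bigotimes^M B^{\sharp\circ})(k)/n\twoheadrightarrow(A\bigotimes^M B)(k)/n$ via \autoref{component1} and the compatibility coming from \cite[Cor.~7.3]{Faltings/Chai1990}. Without this, your necessity argument (``$g_*$ would carry some ramified toric class of $\img(\delta_1)$ into\dots'') has no classes to carry, and your sufficiency argument cannot certify that the toric component imposes no condition. Relatedly, describing the Kummer image on the \'{e}tale quotient $A[n]/A[n]^f\simeq(\Z/n)^{\oplus r}$ as ``the unramified subgroup'' is not right: by the uniformization the image of $A^\circ(k)$ already dies in that quotient (up to the finite component group), which is why that step is harmless --- for a different reason than the one you give.

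Two further points. First, ``$g$ annihilates $\img(s_n)$ iff $g_*(\img(\delta_1))\subseteq\img(\delta_2)^\perp$'' over $k$ alone is too weak: $\img(s_n)$ contains corestrictions of symbols from every finite $L/k$, so the correct condition is a family of containments over all $L$; the paper's formulation via Tate-duality compatibility squares against the auxiliary K-groups $K(k;G_1,T_2)$, $K(k;T_1,G_2)$, $K(k;A_1,A_2)$ handles this automatically. Second, the identification $\img(\delta_2)^\perp=\img(\delta_2^\ast)$ is not available as stated: when $T_2\neq 0$ the Cartier dual $\Hom(G_2[n],\mu_n)$ is not the $n$-torsion of a semi-abelian variety (it has a lattice quotient), so ``the Kummer map of the dual'' requires the full $1$-motive duality formalism, which you would then have to prove compatible with your filtration; the paper avoids this entirely by instead proving surjectivity of auxiliary Galois symbols (\autoref{special1}, \autoref{special3}) and reading off orthogonality from non-degeneracy of the pairings. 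Finally, you yourself defer the d\'{e}vissage across the non-split filtration as ``the main obstacle'' --- that, together with the uniformization, is precisely where the paper's proof lives, so the proposal as written leaves the substantive work undone.
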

The proof of \autoref{mainsemiab} will be presented in \autoref{proofs} and it will occupy most of this article. In \autoref{abeliancase} we prove the important special case of two abelian varieties $A,B$ with split semistable reduction (\autoref{mainab2}). We note that in certain special cases the assumptions of \autoref{mainsemiab} can be weakened (see for example \autoref{special1}). 


\subsection{An application to zero-cycles} In \autoref{apps} we prove a finiteness result (\autoref{finiteness}), which gives us the following important corollary about zero-cycles. 
\begin{cor}\label{zerocycles} Let $X$ be a smooth projective variety over a finite extension $k$ of $\Q_p$ with $p\geq 5$. We assume that $X$ is  either an abelian variety with split semistable reduction, or a product $C_1\times\cdots\times C_d$ of smooth curves such that for each $i\in\{1,\cdots,d\}$, the curve $C_i$ has a $k$-rational point and the Jacobian variety $J(C_i)$ has split semistable reduction. Then for every prime $l$, the cycle map to \'{e}tale cohomology \[CH_0(X)\rightarrow H^{2d}(X_{et},\Z_l(d)),\] when restricted to the Albanese kernel, $T(X)$, of $X$ has finite image. 
\end{cor}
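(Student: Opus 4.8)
The plan is to reduce the statement to the finiteness assertion of \autoref{finiteness} for the generalized Galois symbol attached to \emph{pairs} of the relevant abelian varieties. First I would invoke the decomposition of the Albanese kernel into Somekawa K-groups. When $X=C_1\times\cdots\times C_d$ is a product of curves each carrying a $k$-rational point, the projectors coming from these rational points decompose $CH_0(X)$, and the Albanese kernel $T(X)$ is described (following Raskind--Spiess) by a sum of Somekawa K-groups $K(k;\Jac(C_{i_1}),\dots,\Jac(C_{i_s}))$ indexed by subsets $\{i_1<\cdots<i_s\}$ with $s\geq 2$. When $X=A$ is an abelian variety, the eigenspace decomposition of $CH_0(A)$ under the multiplication maps (Beauville) singles out a weight-two graded piece of $T(A)$, which maps to the degree-$2d$ \'etale cohomology through the Somekawa K-group $K(k;A,A)$. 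In either case, reducing the cycle map modulo $l^m$ and combining the Hochschild--Serre spectral sequence for $X/k$ with the K\"unneth decomposition of $H^\bullet(\overline{X},\Z/l^m(d))$ identifies, on each graded piece, the cycle map with the corresponding generalized Galois symbol $s_{l^m}$.

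The key observation is that $k$ has cohomological dimension $2$: a Somekawa K-group with $s\geq 3$ entries is sent by $s_{l^m}$ into $H^s(k,-)=0$, so every piece of the Albanese filtration of depth at least three contributes trivially to \'etale cohomology. After the K\"unneth and Poincar\'e-duality identifications, the surviving contributions to $H^{2d}(X_{et},\Z/l^m(d))$ come from the two-entry pieces $H^2\bigl(k,\Jac(C_i)[l^m]\otimes\Jac(C_j)[l^m]\bigr)$ for the finitely many pairs $i<j$ (and from $H^2(k,A[l^m]\otimes A[l^m])$ in the abelian case), on which the cycle map agrees with the Galois symbol
\[ s_{l^m}\colon K(k;\Jac(C_i),\Jac(C_j))/l^m\longrightarrow H^2\bigl(k,\Jac(C_i)[l^m]\otimes\Jac(C_j)[l^m]\bigr). \]

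The final step is to apply \autoref{finiteness}: the Tate-duality description of the orthogonal complement of $\img(s_{l^m})$ supplied by \autoref{mainsemiab} (respectively its abelian specialization \autoref{mainab2}) forces $\img(s_{l^m})$ to have cardinality bounded independently of $m$, and passing to the inverse limit $H^{2d}(X_{et},\Z_l(d))=\varprojlim_m H^{2d}(X_{et},\Z/l^m(d))$ then shows that the image of the integral cycle map on $T(X)$ is finite. I expect the principal obstacle to lie in the identification of the preceding paragraph: matching the geometric cycle map with the arithmetic Galois symbol on each graded piece requires a careful comparison of the Hochschild--Serre and K\"unneth identifications with the Cartier-duality identifications underlying the Tate pairing, and one must check that the uniform-in-$m$ bound on $\img(s_{l^m})$ is genuinely preserved under the $l$-adic limit rather than merely holding at each finite level.
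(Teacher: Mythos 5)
Your proposal is correct and follows essentially the same route as the paper: the paper's proof simply cites \cite[Proposition 2.4, Lemma 2.5]{Yamazaki2005} and \cite[Section 6]{Gazaki2015} for exactly the decomposition of $T(X)$ into Somekawa K-groups, the vanishing of the pieces with three or more factors by cohomological dimension, and the identification of the cycle map with the Galois symbol on the two-factor pieces, and then applies \autoref{finiteness}. The details you spell out (K\"unneth, Hochschild--Serre, the uniform bound surviving the $l$-adic limit) are precisely what those references and the proof of \autoref{finiteness} supply.
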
 
The significance of this corollary lies in the case  $l=p$. In fact, it is a conjecture of Colliot-Th\'{e}l\`{e}ne (\cite{Colliot-Thelene1995}) that the Albanese kernel of any smooth projective variety $X$ over a $p$-adic field is the direct sum of a finite group and a divisible group. Recently a weaker form of this conjecture was established by S. Saito and K. Sato (\cite{Saito/Sato2010}). However, the finiteness of $\varprojlim T(X)/p^n$ is still wildly open. \autoref{zerocycles} is an effort to show that at least the cycle map behaves compatibly with the conjecture. 

\subsection{A remark about the method} We note that \autoref{goodred} was proved using two main tools. First, the relation between the K-group $K(k;A,B)$ and the Chow group of zero-cycles, $CH_0(A\times B)$, obtained in \cite{Gazaki2015} together with a result of S. Saito and K. Sato (\cite{Saito/Sato2014}). Second, we used integral comparison theorems for $p$-adic cohomology theories. We believe that a proof of \autoref{mainsemiab} could potentially be obtained using $\log$ analogues of the latter. However, to our knowledge the theory of $\log$ finite flat group schemes has not been fully developed yet, therefore in the current article we chose an alternative method to prove \autoref{mainsemiab}. Namely, we use the theory of $p$-adic uniformization of abelian varieties as discussed in \cite{SGA7(I)} and \cite{Faltings/Chai1990}, in order to reduce to the good reduction case. 

We also note that \autoref{mainsemiab} does not provide a new proof of the results discussed in examples \eqref{example1} and \eqref{example2}. It only puts these classical results under a unified framework. In fact, we use these results in order to prove our theorem.

\subsection{Notation} Throughout this article unless otherwise mentioned, we will denote by  $k$ 
a finite extension  of $\Q_p$ with ring of integers $\mathcal{O}_k$ and residue field $\kappa$. We will denote by $G_k$ the Galois group, $\Gal(\overline{k}/k)$ of $k$. For a variety $X$ over $k$ and a field extension $L/k$, we will denote by $X_L=X\otimes_k L$ the base change to $L$. 

For an abelian variety $A$ over $k$, we will denote by $\mathcal{A}$ its N\'{e}ron model over $\Spec(\mathcal{O}_k)$. Moreover, for every $n\geq 1$, we will denote by $A[n]$ the group of $n$-torsion points of $A$. The Galois cohomology groups of $k$ will be denoted by $H^i(k,-)$. Finally, all Galois symbol maps considered in this paper will be denoted by $s_n$, but it will always be clear which Galois symbol we are referring to  each time.

\subsection{Acknowledgements} I would like to express my great gratitude to Professor Kazuya Kato for sharing his vision with me that lead to theorem \eqref{mainsemiab}. I am also grateful to Professor Spencer Bloch for his valuable encouragement and for a fruitful discussion that lead to the  finiteness results obtained in \autoref{apps}. Moreover, I would like to thank Professors Bhargav Bhatt, Toshiro Hiranouchi, Mihran Papikian and Dr. Isabel Leal for useful discussions. Finally, I am grateful to the referee whose useful suggestions helped improve significantly the paper. 

\vspace{3pt}
\section{Background material}\label{background} In this section we review some necessary facts about abelian varieties with semistable reduction, Somekawa K-groups and Tate duality.   
\subsection{A filtration on $A[n]$}\label{fildef} 
We start by reviewing the definition of semistable reduction for abelian varieties over $p$-adic fields. 
\begin{defn}\label{semistabledefn}\cite[Expos\'{e} IX, Prop. 3.2, Def. 3.4]{SGA7(I)}
An abelian variety $A$ over a $p$-adic field $k$ is said to have \textit{semistable reduction}, if the connected component, $\mathcal{A}_s^\circ$, of the zero element of the special fiber, $\mathcal{A}_s$, of the N\'{e}ron model $\mathcal{A}$ of $A$ is a semi-abelian variety over the residue field $\kappa$, that is it fits  into a short exact sequence of commutative groups over $\kappa$, \[0\rightarrow \overline{T}\rightarrow \mathcal{A}_s^\circ\rightarrow \overline{C}\rightarrow 0,\] where $\overline{T}$ is a torus and $\overline{C}$ is an abelian variety over $\kappa$.
We say further that $A$ has \textit{split semistable reduction}, if $\overline{T}$ is a split torus. 
\end{defn}
  
From now on we consider an abelian variety $A$ over $k$ with such type of reduction. 
 When $A$ has good reduction, the N\'{e}ron model $\mathcal{A}$  is an abelian scheme over $\Spec(\mathcal{O}_k)$ and for every integer $n\geq 1$ the group scheme $\mathcal{A}[n]$ is finite and flat over $\Spec(\mathcal{O}_k)$. When $A$ has semistable reduction, $\mathcal{A}^\circ[n]$ is in general only flat and quasi-finite over $\Spec(\mathcal{O}_k)$. As such, it has a decomposition as a scheme over $\Spec(\mathcal{O}_k)$,
 \[\mathcal{A}^\circ[n]=\mathcal{A}^\circ[n]^f\bigsqcup\mathcal{A}^\circ[n]',\] where 
$\mathcal{A}^\circ[n]^f$ is a  finite group scheme over $\Spec(\mathcal{O}_k)$ which is a clopen subgroup of $\mathcal{A}^\circ[n]$, and $\mathcal{A}^\circ[n]'\times_{\mathcal{O}_k}\kappa=\emptyset$. (See \cite[Expos\'{e} IX, 2.2.3.1]{SGA7(I)}). Thus, the group schemes $\mathcal{A}^\circ[n]$ and $\mathcal{A}^\circ[n]^f$ have the same special fiber. From now on we will refer to $\mathcal{A}^\circ[n]^f$ as \textit{the finite part} of $\mathcal{A}^\circ[n]$. 

Furthermore, for every  $n\geq 1$, the finite part $\mathcal{A}^\circ[n]^f$ has a \textit{toric subgroup}, $\mathcal{A}^\circ[n]^t\subset \mathcal{A}^\circ[n]^f$ with the property,
\[\mathcal{A}^\circ[n]^t\times_{\mathcal{O}_k}\kappa\simeq\overline{T}[n].\] For details on the definition of the toric part we refer to \cite[Expos\'{e} IX, 2.3]{SGA7(I)} for the coprime-to-$p$ case and to \cite[Expos\'{e} IX, 7.3.1]{SGA7(I)} for the case when $n$ is a $p$-power. 

The above filtration induces a filtration $A[n]\supset A[n]^f\supset A[n]^t\supset 0$ of the $G_k$-module $A[n]$ by setting \[A[n]^\bullet=A[n]\cap\mathcal{A}^0[n]^\bullet(\mathcal{O}_{\overline{k}})\;\;\;\;\text{for }``\bullet"=f,t.\] 
\subsection{The orthogonality theorem}\label{orth} Let $A^\star$ be the dual abelian variety of $A$. Since $A$ and $A^\star$ are isogenous, $A^\star$ has also semistable reduction (\cite[Expos\'{e} IX, 2.2.6, 2.2.7]{SGA7(I)}). Moreover, if we have a short exact sequence of commutative group schemes over $\kappa$,
\[0\rightarrow\overline{T}'\rightarrow\mathcal{A}_s^{\star\circ}\rightarrow\overline{C}'\rightarrow 0,\] then there is an isomorphism $\overline{C}'\simeq \overline{C}^\star$. Moreover, the torii $\overline{T}$ and $\overline{T}'$ have the same rank (\cite[Expos\'{e} IX, Theorem 5.4]{SGA7(I)}).

For every  $n\geq 1$, we consider the Weil pairing,
\[\langle ,\rangle:A[n]\otimes A^\star[n]\rightarrow\mu_n.\] The toric part  of $A[n]$ has then the following description, 
\[A[n]^t=A[n]^f\cap(A^\star[n]^f)^\perp,\] where $(A^\star[n]^f)^\perp$ is the complement of $A^\star[n]^f$ under the Weil pairing. The toric part $A^\star[n]^t$ of the dual has the symmetric property. This property is known as \textit{the orthogonality theorem} (see \cite[Expos\'{e} IX, Theorem 5.2]{SGA7(I)}). 

Next assume that $A$ (and therefore also $A^\star$) has split semistable reduction, with $\overline{T}\simeq\G_{m,\kappa}^{\oplus r}$ for some  $r\geq 1$. Then using the orthogonality theorem we obtain an isomorphism, 
\[A[n]/A[n]^f\simeq\Z/n^{\oplus r}.\] One can see this by computing the Cartier dual, $\Hom(A[n]/A[n]^f,\mu_n)$ which is precisely $A^\star[n]^t$.

\subsection{The component group} For an abelian variety $A$ over $k$ with semistable reduction, the component group $\Phi_A$ is defined as follows,
\[\Phi_A:=\mathcal{A}_s/\mathcal{A}_s^\circ,\] and is therefore a finite abelian group. We consider the \textit{character group} of the maximal torus $\overline{T}$ of $\mathcal{A}_s^\circ$, 
\[X_A:=\Hom_{\overline{\kappa}}(\overline{T}_{\overline{\kappa}},\mathbb{G}_{m,\overline{\kappa}}).\] This is a free abelian group contravariantly associated to $A$. The component group $\Phi_A$ fits into the following short exact sequence, 
\[0\rightarrow X_{A^\star}\rightarrow\Hom(X_A,\Z)\rightarrow\Phi_A\rightarrow 0.\] 
 
 For more details on the component group $\Phi_A$ we refer  to \cite{Conrad/Stein2001}. 
 
\subsection{The Raynaud extension}\label{Raynaud} It remains to understand the quotient $A[n]^f/A[n]^t$. To do this we need to introduce the Raynaud extension, $\mathcal{A}^\sharp$, of $A$. 
Some of the properties of this extension are summarized in the following theorem. 
\begin{theo}\label{Raynaud1}(\cite[Expose IX, section 7]{SGA7(I)}. Let $A$ be an abelian variety over $k$ with split semistable reduction. Then there exists a smooth commutative group scheme $\mathcal{A}^\sharp$ over $\Spec(\mathcal{O}_k)$ and an exact sequence of smooth commutative group schemes over $\Spec(\mathcal{O}_k)$,
\[0\rightarrow\mathcal{T}\rightarrow\mathcal{A}^{\sharp\circ}\rightarrow\mathcal{C}\rightarrow 0\] satisfying the following properties. 
\begin{enumerate}
\item $\mathcal{T}\simeq\G_{m,\mathcal{O}_k}^{\oplus r}$ is a split torus.
\item $\mathcal{C}$ is an abelian scheme.
\item The special fibers of $\mathcal{A}^{\sharp\circ}$ and $\mathcal{A}^\circ$ coincide. 
\item $\Phi_A\simeq\mathcal{A}^\sharp/\mathcal{A}^{\sharp\circ}$.
\item We have an isomorphism of formal groups, $\widehat{\mathcal{A}^{\sharp}}\simeq\widehat{\mathcal{A}}$.
\item For every $n\geq 1$ there are isomorphisms of finite flat group schemes, $\mathcal{A}^{\sharp\circ}[n]\simeq\mathcal{A}^\circ[n]^f$ and $\mathcal{A}^\circ[n]^f/\mathcal{A}^\circ[n]^t\simeq\mathcal{C}[n]$. 
\end{enumerate}
\end{theo}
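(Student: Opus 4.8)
The statement recollects the theory of the Raynaud extension, so the plan is to recall its construction by formal algebraization and then read off the six properties. The starting point is the formal completion $\widehat{\mathcal{A}^\circ}$ of the connected N\'{e}ron model along its special fiber; since $A$ has split semistable reduction this is a formal semi-abelian scheme over $\mathrm{Spf}(\mathcal{O}_k)$ whose reduction is the extension $0\to\overline{T}\to\mathcal{A}_s^\circ\to\overline{C}\to 0$ with $\overline{T}\simeq\G_{m,\kappa}^{\oplus r}$. First I would isolate inside $\widehat{\mathcal{A}^\circ}$ the maximal formal subtorus $\widehat{\mathcal{T}}$ lifting $\overline{T}$; being a formal torus over a complete base it algebraizes to the split torus $\mathcal{T}\simeq\G_{m,\mathcal{O}_k}^{\oplus r}$, giving property (1). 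The quotient formal group is a formal abelian scheme lifting $\overline{C}$, and by Grothendieck's existence theorem it algebraizes to an abelian scheme $\mathcal{C}$ over $\Spec(\mathcal{O}_k)$, giving property (2).

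Next I would algebraize the extension itself. The class of $\widehat{\mathcal{A}^\circ}$ as an extension of $\widehat{\mathcal{C}}$ by $\widehat{\mathcal{T}}$ lies in $\Ext^1(\widehat{\mathcal{C}},\widehat{\mathcal{T}})\simeq\widehat{\mathcal{C}^\star}(\mathcal{O}_k)^{\oplus r}$ by the Barsotti--Weil formula, and formal GAGA identifies this with $\mathcal{C}^\star(\mathcal{O}_k)^{\oplus r}\simeq\Ext^1(\mathcal{C},\mathcal{T})$. The corresponding genuine extension is the connected Raynaud extension $0\to\mathcal{T}\to\mathcal{A}^{\sharp\circ}\to\mathcal{C}\to 0$, and by construction its formal completion along the special fiber recovers $\widehat{\mathcal{A}^\circ}$, which is property (5). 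Property (3) is then immediate, since the special fiber of any scheme over $\mathcal{O}_k$ agrees with that of its formal completion, whence $\mathcal{A}^{\sharp\circ}_s=\mathcal{A}^\circ_s$. To obtain $\mathcal{A}^\sharp$ with property (4) I would push out the component extension: the N\'{e}ron model satisfies $\Phi_A=\mathcal{A}_s/\mathcal{A}_s^\circ$, and one extends $\mathcal{A}^{\sharp\circ}$ by the same \'{e}tale component group so that $\mathcal{A}^\sharp/\mathcal{A}^{\sharp\circ}\simeq\Phi_A$.

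Finally, for property (6) I would exploit that $\mathcal{A}^{\sharp\circ}$ is a semi-abelian scheme, so multiplication by $n$ is finite flat and $\mathcal{A}^{\sharp\circ}[n]$ is a finite flat group scheme. Applying the snake lemma to $[n]$ on $0\to\mathcal{T}\to\mathcal{A}^{\sharp\circ}\to\mathcal{C}\to 0$ and using that $[n]\colon\mathcal{T}\to\mathcal{T}$ is fppf-surjective with kernel $\mu_n^{\oplus r}$ yields a short exact sequence $0\to\mu_n^{\oplus r}\to\mathcal{A}^{\sharp\circ}[n]\to\mathcal{C}[n]\to 0$. Identifying $\mu_n^{\oplus r}$ with the toric part $\mathcal{A}^\circ[n]^t$ then gives the second isomorphism $\mathcal{A}^\circ[n]^f/\mathcal{A}^\circ[n]^t\simeq\mathcal{C}[n]$, provided the first isomorphism $\mathcal{A}^{\sharp\circ}[n]\simeq\mathcal{A}^\circ[n]^f$ is established. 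This first isomorphism is the crux: by definition $\mathcal{A}^\circ[n]^f$ is the maximal finite flat clopen subgroup of the quasi-finite $\mathcal{A}^\circ[n]$, equivalently the part detected by the special fiber and hence by the formal completion; combining this with the formal identification $\widehat{\mathcal{A}^{\sharp\circ}}\simeq\widehat{\mathcal{A}^\circ}$ of property (5) matches it with the already-finite-flat $\mathcal{A}^{\sharp\circ}[n]$.

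The hard part will be exactly this last comparison of finite flat $n$-torsion, which bifurcates according to whether $n$ is prime to $p$ or a power of $p$. When $(n,p)=1$ all the relevant group schemes are \'{e}tale and the comparison is a statement about $G_k$-modules that follows from property (5) at the level of geometric points, essentially \cite[Exp. IX, 2.2.3.1 and 2.3]{SGA7(I)}. When $n$ is a power of $p$ the formal group genuinely contributes $p$-power torsion, and one must argue with the associated $p$-divisible groups and the full formal uniformization, which is the content of \cite[Exp. IX, 7.3.1]{SGA7(I)}; controlling the $p$-part of $\mathcal{A}^{\sharp\circ}[n]$ through the formal isomorphism, rather than merely through its special fiber, is where the genuine work lies.
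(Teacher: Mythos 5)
The paper offers no proof of this statement: it is quoted verbatim as a recollection from \cite[Exp.\ IX, \S 7]{SGA7(I)} and \cite{Faltings/Chai1990}, so there is no in-paper argument to compare against. Your sketch is a faithful reconstruction of the standard construction in those references: lift the maximal torus of $\mathcal{A}_s^\circ$ to a formal subtorus, algebraize the abelian quotient by Grothendieck existence, and algebraize the extension class via $\Ext^1(\widehat{\mathcal{C}},\widehat{\G}_m)\simeq\widehat{\mathcal{C}^\star}(\mathcal{O}_k)=\mathcal{C}^\star(\mathcal{O}_k)\simeq\Ext^1(\mathcal{C},\G_m)$, which yields (1)--(5) essentially by construction. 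Your treatment of (6) is also correct, but the case division you anticipate at the end is not actually needed for that step: a finite flat group scheme over the complete ring $\mathcal{O}_k$ is canonically isomorphic to its formal completion, while the formal completion of the quasi-finite $\mathcal{A}^\circ[n]$ along the special fiber discards exactly the clopen piece with empty special fiber; hence $\widehat{\mathcal{A}^{\sharp\circ}}\simeq\widehat{\mathcal{A}^\circ}$ gives $\mathcal{A}^{\sharp\circ}[n]\simeq\mathcal{A}^\circ[n]^f$ uniformly in $n$. The prime-to-$p$ versus $p$-power bifurcation in \cite[Exp.\ IX, 2.3 and 7.3.1]{SGA7(I)} concerns the construction of the toric subgroup $\mathcal{A}^\circ[n]^t$, not this comparison.
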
 
\begin{defn}\label{abelianquotient} For an abelian variety $A$ over $k$ with split semistable reduction we will denote by $\mathcal{A}[n]^{\ab}$ the quotient $\mathcal{A}[n]^f/\mathcal{A}[n]^t$ and we will call it the \textit{abelian piece} of $\mathcal{A}[n]$. Using the notation of \autoref{Raynaud1}, we have an isomorphism $\mathcal{A}[n]^{\ab}\simeq\mathcal{C}[n]$. 
\end{defn}
Moreover, we have the following classical $p$-adic uniformization theorem. 
\begin{theo}\label{Raynaud2}(\cite[III.8.1]{Faltings/Chai1990}) In the situation of \autoref{Raynaud1} there is a discrete subgroup $\Gamma\subset\mathcal{A}^{\sharp\circ}(k)$ such that 
\begin{enumerate}
\item $\Gamma$ is a free abelian group of rank $r$ (equal to the rank of $\mathcal{T}$). 
\item For every finite extension $L/k$ there is an isomorphism \[A^\circ(L)\simeq\mathcal{A}^{\sharp\circ}(L)/\Gamma,\] where $A^\circ(L):=\mathcal{A}^{\circ}(\mathcal{O}_L)$.  
\end{enumerate}
\end{theo}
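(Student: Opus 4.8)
The plan is to prove the statement rigid-analytically, over the generic fibre, and then descend the resulting lattice to $k$ using the hypothesis of \emph{split} semistable reduction. The prototype is the Tate curve, where an elliptic curve with split multiplicative reduction is uniformized as $\G_m^{\mathrm{rig}}/q^{\Z}$; the present statement is the semi-abelian generalization, in which the toric part $\mathcal{T}\simeq\G_m^{\oplus r}$ of $\mathcal{A}^{\sharp\circ}$ plays the role of $\G_m$ while the abelian part $\mathcal{C}$ is carried along unchanged. Concretely, the proof breaks into three tasks: constructing a homomorphism of rigid-analytic groups out of $\mathcal{A}^{\sharp\circ}$, identifying its kernel as a rank-$r$ lattice $\Gamma$ rational over $k$, and algebraizing the quotient so that it is recognized as $A$ together with its integral structure.

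First I would pass to rigid-analytic generic fibres. By property (5) of \autoref{Raynaud1} the formal completions $\widehat{\mathcal{A}^\sharp}$ and $\widehat{\mathcal{A}}$ along their special fibres coincide, so the residue disc of the identity in $(\mathcal{A}^{\sharp\circ})^{\mathrm{rig}}$ is canonically identified with that in $A^{\mathrm{rig}}$; throughout we work with identity components, the component groups matching by property (4). The Raynaud extension realizes $(\mathcal{A}^{\sharp\circ})^{\mathrm{rig}}$ as an extension of the analytification of the abelian scheme $\mathcal{C}$ by the analytic torus $(\G_m^{\oplus r})^{\mathrm{rig}}$. I would extend the formal identification to a homomorphism of rigid-analytic groups
\[\pi:(\mathcal{A}^{\sharp\circ})^{\mathrm{rig}}\longrightarrow A^{\mathrm{rig}},\]
and show, using property (3) that the special fibres agree, that $\pi$ is étale with open image; surjectivity then follows since $A^{\mathrm{rig}}$ is connected and $\pi$ is a homomorphism with open image subgroup.

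Next I would identify the kernel of $\pi$. The target $A$ is proper, whereas $\mathcal{A}^{\sharp\circ}$ has a nontrivial toric part of rank $r$ and hence is not proper; the discrepancy is exactly a full lattice in the multiplicative directions. Concretely, the kernel is the image of a \emph{period homomorphism} $q:X\to\mathcal{A}^{\sharp\circ}(k)$, where $X$ is the character lattice of the toric part, of rank $r$ (the two tori having equal rank by the orthogonality theorem recalled in \autoref{orth}). Because the reduction is \emph{split}, $X$ is a constant $G_k$-module, so $q$ is defined over $k$ and $\Gamma:=q(X)\subset\mathcal{A}^{\sharp\circ}(k)$ is a free abelian group of rank $r$, giving (1); discreteness follows from the non-degeneracy of the monodromy pairing, i.e. from the fact that the valuations of the periods span a full lattice. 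For (2), I would take $\mathcal{O}_L$-points for a finite extension $L/k$: since $\Gamma$ is already $k$-rational there are no further periods to add over $L$, and restricting $\pi$ to bounded (integral) points yields $A^\circ(L)=\mathcal{A}^\circ(\mathcal{O}_L)\simeq\mathcal{A}^{\sharp\circ}(L)/\Gamma$, using once more that the two formal groups coincide so that the two notions of integral point agree.

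The hard part will be the algebraization step: proving that the rigid-analytic quotient $(\mathcal{A}^{\sharp\circ})^{\mathrm{rig}}/\Gamma$ is the analytification of the \emph{algebraic} abelian variety $A$, and doing so compatibly with the integral structures over $\Spec(\mathcal{O}_k)$. This is precisely the content of the Raynaud--Mumford construction: one must produce an ample line bundle on the quotient descending from theta-type functions on $(\mathcal{A}^{\sharp\circ})^{\mathrm{rig}}$ that are quasi-periodic with respect to $\Gamma$, and then invoke rigid-analytic GAGA to algebraize. Equivalently, following \cite{Faltings/Chai1990}, one encodes the data of $(\mathcal{C},\mathcal{T},\Gamma)$ together with a trivialized biextension as \emph{degeneration data} and runs Mumford's construction in reverse to recover the semi-abelian scheme and its generic fibre $A$. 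This is the step that genuinely requires the full semistable (not merely analytic) hypothesis, and it is the reason the theorem is cited from \cite{Faltings/Chai1990} rather than reproved here.
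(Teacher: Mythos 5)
There is nothing in the paper to compare against here: \autoref{Raynaud2} is stated as a black-box citation of \cite[III.8.1]{Faltings/Chai1990} and no proof is given or intended. Your sketch is essentially a reconstruction of the argument in the cited source (Tate curve prototype, rigid-analytic quotient by a period lattice, algebraization via Mumford's construction / degeneration data), and your closing remark correctly identifies why the paper cites rather than reproves it. Two imprecisions are worth flagging. First, the period lattice is not the image of the character group $X=X_A$ of the toric part of $\mathcal{A}^{\sharp\circ}$ itself, but of the character group $Y=X_{A^\star}$ of the torus of the \emph{dual} Raynaud extension, embedded via the period (monodromy) homomorphism $Y\rightarrow\mathcal{A}^{\sharp\circ}(k)$; by the orthogonality theorem both lattices have rank $r$, so conclusion (1) is unaffected, but the identification matters if one wants the monodromy pairing $Y\times X\rightarrow\Z$ to be the right non-degenerate pairing certifying discreteness. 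Second, for (2) the phrase ``restricting $\pi$ to bounded (integral) points'' is backwards: one does not restrict the source to bounded points (the lattice $\Gamma$ itself is unbounded in the toric directions); rather, \emph{all} of $\mathcal{A}^{\sharp\circ}(L)$ maps onto the points of $A(L)$ reducing into the identity component, and the surjectivity over each finite $L$ without further field extension follows from the vanishing $H^1(G_L,\Gamma)=0$, which holds because split reduction makes $\Gamma$ a constant, discrete, torsion-free $G_L$-module. Your ``no further periods to add over $L$'' gestures at this but should be replaced by that cohomological vanishing. With these repairs the outline matches the standard proof.
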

\subsection{A filtration of $G[n]$} Next we want to obtain similar information for a semi-abelian variety $G$ over $k$. 
\begin{defn}\label{filsemiab} Let $G$ be a semi-abelian variety over $k$. Assume that $G$ fits into a short exact sequence of commutative groups over $k$,
\[0\rightarrow T\rightarrow G\rightarrow A\rightarrow 0,\] where $T=\G_m^{\oplus r}$ for some $r\geq 0$ and $A$ is an abelian variety with split semistable reduction. For every integer $n\geq 1$ this induces a short exact sequence of $G_k$-modules,
\[0\rightarrow T[n]\rightarrow G[n]\rightarrow A[n]\rightarrow 0.\]  We define a filtration, $G[n]\supset G[n]^f\supset G[n]^t\supset 0$, of $G[n]$ as follows. Let $G[n]^f$ (resp. $G[n]^t$) be the $G_k$-submodule of $G[n]$ which is such that $G[n]^f/T[n]\simeq A[n]^f$ (resp. $G[n]^t/T[n]\simeq A[n]^t$).
\end{defn}  Note that the \textit{toric part} $G[n]^t$ fits into a short exact sequence, \[0\rightarrow T[n]\rightarrow G[n]^t\rightarrow A[n]^t\rightarrow 0.\] Moreover, it follows directly from the definition that we have an isomorphism \[G[n]^f/G[n]^t\simeq A[n]^f/A[n]^t.\] 

Finally, we want to describe a similar filtration for the Cartier dual, $\Hom(G[n],\mu_n)$.  
\begin{defn}\label{filsemiabdual}
We define $\Hom(G[n],\mu_n)^f$ (resp. $\Hom(G[n],\mu_n)^t$) to be the kernel of 
\[\Hom(G[n],\mu_n)\rightarrow\Hom(G[n]^t,\mu_n)\rightarrow 0,\] (resp. the kernel of $\Hom(G[n],\mu_n)\rightarrow\Hom(G[n]^f,\mu_n)\rightarrow 0$.)
\end{defn}
\vspace{2pt}
\subsection{Mackey functors and Somekawa K-group} Throughout this subsection, $k$ can be any perfect field, not necessarily $p$-adic.
We review some facts about  the Somekawa  $K$-group $K(k;G_1,\ldots,G_r)$ attached to semi-abelian varieties $G_1,\ldots,G_r$ over $k$.  We start with the definition of a Mackey functor. For a more detailed discussion we refer to \cite[p. 13, 14]{Raskind/Spiess2000}. 

A Mackey functor $\mathcal{F}$ is a \textit{presheaf with transfers} on the category of \'{e}tale $k$-schemes. A presheaf with transfers is a usual presheaf $\mathcal{F}$ on 
$(\Spec k)_{et}$ having the following additional property. For every finite morphism $X\stackrel{f}{\longrightarrow} Y$ of \'{e}tale $k$-schemes, in addition to the restriction map $\mathcal{F}(Y)\stackrel{f^\star}{\longrightarrow}\mathcal{F}(X)$, there is also a push-forward map, $\mathcal{F}(X)\stackrel{f_\star}{\longrightarrow} \mathcal{F}(Y)$, which we will call the norm, and denote it by $N_{X/Y}$. Moreover, there is a decomposition $\mathcal{F}(X_1\sqcup X_2)=\mathcal{F}(X_1)\oplus \mathcal{F}(X_2)$. Therefore, $\mathcal{F}$ is fully determined by its value  $\mathcal{F}(L):=\mathcal{F}(\Spec L)$ at every finite extension $L$ over $k$.  
\begin{exmp} Every semi-abelian variety $G$  over $k$ induces a Mackey functor  by assigning to a finite extension $L/k$, $G(L):=\Hom(\Spec L, G)$. For a finite extension $F/L$,  the push-forward  is the norm map on semi-abelian varieties, $N_{F/L}:G(F)\rightarrow G(L)$. 
\end{exmp}
Kahn proved in \cite{Kahn1992} that  the category $MF_k$ of Mackey functors on $(\Spec k)_{et}$ is an \textit{abelian category with a tensor product $\otimes^M$}, whose definition we review here. 
\begin{defn} Let $\mathcal{F}_1,\ldots, \mathcal{F}_r$ be Mackey functors on $(\Spec(k))_{et}$. The Mackey product ${\mathcal{F}_1\otimes^M\cdots\otimes^M \mathcal{F}_r}$ is defined at a finite extension $K$ over $k$ as follows: 
\[(\mathcal{F}_1\otimes^M\cdots\otimes^M \mathcal{F}_r)(K):=\left(\bigoplus_{L/K\text{ finite}} \mathcal{F}_1(L)\otimes\cdots\otimes \mathcal{F}_r(L)\right)/R_1,\] where $R_1$ is the subgroup generated by elements of the form 
\[\label{projectionformula} a_1\otimes\cdots\otimes N_{L'/L}(a_i)\otimes\cdots\otimes a_r-N_{L'/L}(\res_{L'/L}(a_1)\otimes\cdots\otimes a_i\otimes\cdots\otimes \res_{L'/L}(a_r)) \in R_1, \] 
where $L'\supset L\supset K$ is a tower of finite extensions of $k$, $a_i\in \mathcal{F}_i(L')$ for some $i\in\{1,\ldots,r\}$, and $a_j\in \mathcal{F}_j(L)$ for every $j\neq i$.  
\end{defn} 
\begin{notn} From now on we will be  using the standard symbol notation for the generators of $(\mathcal{F}_1\otimes^M\cdots\otimes^M \mathcal{F}_r)(K)$, namely $\{a_1,\ldots,a_r\}_{L/K}$ for $a_i\in \mathcal{F}_i(L)$. 
\end{notn}

\begin{rem}\label{normnotations} We note that the symbol $\{a_1,\ldots,a_r\}_{L/K}\in (\mathcal{F}_1\otimes^M\cdots\otimes^M \mathcal{F}_r)(K)$ is nothing but $N_{L/K}(\{a_1,\ldots,a_r\}_{L/L})$.  The defining relation $R_1$ is classically referred to as \textit{projection formula}. We rewrite it using the symbolic notation:
\begin{equation}\{a_1,\ldots, N_{L/K}(a_i),\ldots,a_r\}_{L/L}=N_{L/K}(\{\res_{L/K}(a_1),\ldots,a_i,\ldots,\res_{L/K}(a_r)\}_{L/L}). 
\end{equation}
\end{rem}
\vspace{1pt}
\subsection*{The Somekawa $K$-group} Let $G_1,\cdots, G_r$ be semi-abelian varieties over $k$. The Somekawa K-group $K(k;G_1,\ldots,G_r)$ attached to $G_1,\ldots,G_r$ is a quotient of the Mackey product, 
\[K(k;G_1,\ldots,G_r)=(G_1\otimes^M\cdots\otimes^M G_r)(k)/R_2,\]
where the relations $R_2$ arise from fucntion fields of curves over $k$. In what follows we will not need the precise definition of $R_2$, therefore we omit it. For more details we refer to the foundational paper \cite{Somekawa1990}). 

An important property of both the Mackey product, $(G_1\otimes^M\cdots\otimes^M G_r)(k)$ and the K-group $K(k;G_1,\ldots,G_r)$ is that they satisfy covariant functoriality, namely if $f:G_i\rightarrow G_i'$ is a homomorphism of semi-abelian varieties for some $i\in\{1,\ldots,r\}$, then it induces a homomorphism, $f_{\star}:K(k;G_1,\ldots,G_i,\ldots,G_r)\rightarrow K(k;G_1,\ldots,G'_i,\ldots,G_r)$.

\subsection*{The generalized Galois Symbol} Let $G_1,\ldots,G_r$ be semi-abelian varieties over $k$ and $n$ an integer invertible in $k$. The Kummer maps, $G_i(L)/n\hookrightarrow H^1(L,G_i[n])$, together with the cup product and the norm map of Galois cohomology (i.e. the corestriction map) induce a generalized Galois symbol, 
\[s_n:K(k;G_1,\ldots,G_r)/n\rightarrow H^r(k,G_1[n]\otimes\cdots\otimes G_r[n]).\] Studying the image of this map is the main goal of this article. 
\subsection{Tate duality}\label{Tateduality} From now on $k$ is a finite extension of $\Q_p$. In this subsection we review how the local Tate duality perfect pairing is defined.  Let $M$ be a finite $G_k$-module annihilated by a positive integer $n\geq 1$. The Tate duality pairing for $H^2$ is a perfect pairing,
\[\langle ,\rangle:H^2(k,M)\times \Hom_{G_k}(M,\mu_n)\rightarrow\Z/n,\] where 
for a class $z\in H^2(k,M)$ and a $G_k$-homomorphism $g:M\rightarrow\mu_n$, $<z,g>$ is defined to be the class of $g_\star(z)\in H^2(k,\mu_n)\simeq\Z/n$. 

It is a simple exercise to show that if $f:M\rightarrow N$ is a homomorphism of $G_k$-modules, then we have a Tate duality compatibility diagram,
\[\xymatrix{
H^2(k,M)\times \Hom_{G_k}(M,\mu_n)\ar[r]^-{\langle ,\rangle_1}\ar@<-10ex>[d]^{f_\star} &\Z/n\\
H^2(k,N) \times  \Hom_{G_k}(N,\mu_n)\ar[r]^-{\langle ,\rangle_2}\ar@<-5ex>[u]^{f^\star} &\Z/n,
}\]
that is, for every $z\in H^2(k,M)$ and $g\in\Hom_{G_k}(N,\mu_n)$ we have an equality \[<f_\star(z),g>_2=<z,f^\star(g)>_1.\]

\vspace{3pt}
\section{Proofs of the main results}\label{proofs}
\subsection{Some preliminary computations}
 In order to simplify the proof of  \autoref{mainsemiab}, we will first prove some important special cases. We start with the following lemma, which can be proved in more generality.  
\begin{lem}\label{special1} Let $G$ be a semi-abelian variety over $k$ that fits into a short exact sequence,
\[0\rightarrow T\rightarrow G\rightarrow A\rightarrow 0,\] where $T$ is a torus and $A$ is an abelian variety over $k$ with potentially good reduction. Let $T'$ be a split torus over $k$. For every $n\geq 1$ the generalized Galois symbol
\[s_n:K(k;G,T')/n\rightarrow H^2(k,G[n]\otimes T'[n])\] is surjective. 
\end{lem}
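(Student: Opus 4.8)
The plan is to reduce first to the case $T'=\G_m$, then to reformulate surjectivity as the vanishing of an orthogonal complement under Tate duality, and finally to peel off the torus and abelian pieces of $G$ using the exact sequence $0\to T\to G\to A\to 0$. Since $T'$ is split we may write $T'\simeq\G_m^{\oplus m}$; by bi-additivity of both the $K$-group and of $H^2(k,G[n]\otimes-)$ in this split factor, and compatibility of $s_n$ with the splitting, it suffices to prove that
\[
s_n\colon K(k;G,\G_m)/n\to H^2(k,G[n]\otimes\mu_n)
\]
is surjective. Because the Tate duality pairing is perfect, this is equivalent to the vanishing of the orthogonal complement of $\img(s_n)$. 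Identifying $\Hom_{G_k}(G[n]\otimes\mu_n,\mu_n)=\Hom_{G_k}(G[n],\Z/n)$, I would compute, for a symbol $\{x,u\}_{L/k}$ and $g\in\Hom_{G_k}(G[n],\Z/n)$,
\[
\langle s_n(\{x,u\}_{L/k}),g\rangle=\bigl(g_\star\delta_L(x)\bigr)\cup\delta_L(u)\in H^2(L,\mu_n)=\Z/n,
\]
using the adjunction between corestriction and restriction for the pairing together with the factorization of the induced map $G[n]\otimes\mu_n\to\mu_n$ as $g\otimes\mathrm{id}_{\mu_n}$; here $\delta_L$ is the Kummer map and the cup product is the perfect local pairing $H^1(L,\Z/n)\times L^*/n\to\Z/n$. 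Letting $u$ range over $L^*$, this shows $g$ lies in the orthogonal complement if and only if $g_\star\delta_L(x)=0$ in $H^1(L,\Z/n)$ for every finite $L/k$ and every $x\in G(L)$. The whole problem thus becomes: show that such a $g$ vanishes.

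Next I would split the vanishing of $g$ into the torus and abelian cases. Restricting $g$ to $T[n]$ and using functoriality of $\delta$ along $T\hookrightarrow G$, the hypothesis forces $(g|_{T[n]})_\star\delta^T_L=0$ for all $L$, i.e. $g|_{T[n]}$ is orthogonal to $\img\bigl(s_n\colon K(k;T,\G_m)/n\to H^2(k,T[n]\otimes\mu_n)\bigr)$. For the torus I would pass to a finite Galois extension $F/k$ splitting $T$, where this symbol is a finite direct sum of copies of $K_2^M(F)/n\to H^2(F,\mu_n^{\otimes2})$ and hence surjective by Merkurjev--Suslin/Tate (\autoref{example1}); surjectivity descends to $k$ because $\mathrm{cores}\colon H^2(F,-)\to H^2(k,-)$ is surjective, being dual to the injective restriction map on $\Hom_{G_k}(-,\mu_n)$. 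Hence $g|_{T[n]}=0$ and $g$ descends to $\bar g\colon A[n]\to\Z/n$.

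Finally, for every $L\supseteq F$ one has $H^1(L,T)=0$ by Hilbert~90, so $G(L)\twoheadrightarrow A(L)$; pushing the hypothesis forward along $G\twoheadrightarrow A$ (using $\pi_\star\delta^G_L=\delta^A_L\circ\pi$ and $\bar g_\star\pi_\star=g_\star$) yields $\bar g_\star\delta^A_L(x)=0$ for every $x\in A(L)$ and every $L\supseteq F$. It then remains to prove the abelian case: if $A$ has potentially good reduction and $\bar g_\star\delta^A_L=0$ for all such $L$, then $\bar g=0$. Here I would pass to a finite extension $k''\supseteq F$ over which $A$ acquires good reduction, where the image of the Kummer map $A(L)/n\hookrightarrow H^1(L,A[n])$ is the finite/flat part of $H^1$ (the unramified part $H^1(\kappa_L,A[n])$ when $n$ is prime to $p$); after enlarging $L$ so that Frobenius acts trivially on $A[n]$, evaluation over the residue field identifies $\bar g_\star$ on this part with $\bar g$ itself, forcing $\bar g=0$.

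The genuine content, and the step I expect to be the main obstacle, is this last abelian good-reduction case. For $n$ prime to $p$ it reduces cleanly to a computation with unramified cohomology over the finite residue field, where restriction and corestriction are transparent. The $p$-primary part, however, requires controlling the Kummer image through the flat cohomology of the Néron model of $A$ over $\Spec(\mathcal{O}_{k''})$; this is exactly the good-reduction surjectivity established by Bloch for Jacobians (\autoref{example2}) and its extension to abelian varieties with good reduction, and it is what makes the argument work for all $n$ without any hypothesis on $p$.
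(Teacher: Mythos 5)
Your argument is correct and follows essentially the same route as the paper: reduce to $T'=\G_m$, treat the torus and abelian pieces separately via $0\to T\to G\to A\to 0$ (Merkurjev--Suslin for the split torus, descent along a finite extension plus Bloch's good-reduction surjectivity and its extension to abelian varieties for $A$), and glue using Hilbert 90. The only difference is presentational: you run the reduction dually, showing the Tate-duality annihilator of $\img(s_n)$ vanishes, whereas the paper does a direct diagram chase on $H^2$ using the exact sequence $0\to T[n]\otimes\mu_n\to G[n]\otimes\mu_n\to A[n]\otimes\mu_n\to 0$; the two are equivalent since the pairing is perfect.
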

\begin{proof} Assume that $T'\simeq\G_m^{\oplus s}$ for some $s\geq 0$. We have a decomposition $K(k;G,T')\simeq \oplus^s K(k;G,\G_m)$, as well as $H^2(k,G[n]\otimes T'[n])\simeq \oplus^s H^2(k,G[n]\otimes\mu_n)$, it therefore suffices to prove the lemma when $s=1$. 
 We start by considering the following edge cases. 

\underline{Case 1}: Assume $G=T$ is a torus, which means $A=0$. When $T$ splits over $k$, the result follows by the Merkurjev-Suslin  theorem (\cite{Merkurjev/Suslin1982}). The case of a more general torus follows by \cite[Lemma 3.1]{Yamazaki2009(a)}.

\underline{Case 2}: Assume $G=A$ is an abelian variety with potentially  good reduction. Let $k'$ be a finite extension of $k$ such that $A\otimes_k k'$ has good reduction. We consider the following diagram, 
\[\xymatrix{
& K(k';A,T')/n\ar[r]^{s_n}\ar[d]^{N_{k'/k}} & H^2(k',A[n]\otimes T'[n])\ar[d]^{N_{k'/k}}\\
& K(k;A,T')/n\ar[r]^{s_n} & H^2(k,A[n]\otimes T'[n]),
}\] where the vetical maps are given by the norm. It follows easily by Tate duality (see for example \cite[2.2]{Bloch1981}) that we have an isomorphism, $H^2(k,A[n]\otimes T'[n])\simeq A[n]_{G_k}$.  The norm on the cohomology side therefore coincides with the natural map 
$N_{k'/k}:A[n]_{G_{k'}}\rightarrow A[n]_{G_k},$ from co-invariants of the smaller group to co-invariants of the larger group. In particular, $N_{k'/k}$ is surjective. It suffices therefore to prove surjectivity of the Galois symbol $s_n$ over $k'$. 

 In the more special case when $A$ is the Jacobian variety of a smooth projective curve $C$ over $k$, this result is due to Bloch (\cite{Bloch1981}). One can imitate the proof to make it work for any abelian variety with good reduction. Since this is not essential for the rest of the paper, we postpone this proof until the appendix (\autoref{Bloch}).  

\underline{General Case}: The case when both $T$ and $A$ are non-trivial can be easily reduced to the previous two cases. Namely, we start with the short exact sequence of $G_k$-modules,
\[0\rightarrow T[n]\rightarrow G[n]\rightarrow A[n]\rightarrow 0.\] By applying the right exact functor $\otimes\mu_n$, we obtain the short exact sequence,
\[0\rightarrow T[n]\otimes\mu_n\rightarrow G[n]\otimes\mu_n\rightarrow A[n]\otimes\mu_n\rightarrow 0.\] Notice that the exactness on the left follows by counting the order of each Galois module. We next consider the following commutative diagram 
\[
	\begin{tikzcd} 
	K(k;T,\G_m)/n\ar{r}{f_1}\ar{d}{\alpha} & H^2(k,T[n]\otimes\mu_n)\ar{d}{\gamma}\\
	K(k;G,\G_m)/n\ar{r}{f_2}\ar{d}{\beta} & H^2(k,G[n]\otimes\mu_n)\ar{d}{\delta}\\
	K(k;A,\G_m)/n\ar{r}{f_3}& H^2(k,A[n]\otimes\mu_n).
	\end{tikzcd}
	\] The diagram has the following properties.
	\begin{itemize}
	\item The maps $f_1, f_3$ are surjective by the previous cases.  (The map $f_1$ is in fact an isomorphism by \cite[Lemma 3.1]{Yamazaki2009(a)}.) 
	\item The map $\beta$ is surjective. This follows by the surjectivity of Mackey functors $G/p\rightarrow A/p$. 
	\item The right column is exact. 
	\end{itemize}  By a simple diagram chasing we can now deduce that the map $f_2$ is surjective. 
	
	Although we won't need this information for what follows, we note that in the special case when $A=J(C)$ is the Jacobian variety of a smooth projective curve over $k$ having a $k$-rational point, the map $f_3$ is also an isomorphism. The proof of this result is due to M. Spiess and can be found in the appendix of \cite{Yamazaki2005}.

\end{proof}
\begin{rem} Notice that when $T$ is a split torus and $A$ has good reduction, \autoref{special1} is indeed a special case of \autoref{mainsemiab}. For, the filtration on $G[n]$ takes on the form $G[n]=G[n]^f\supset G[n]^t=\mu_n^{\oplus r}\supset 0$, while  $\Hom(T'[n],\Z/n)\simeq\Z/n^{\oplus s}$ has trivial finite part. According to \autoref{mainsemiab}, a homomorphism $g:G[n]\rightarrow \Hom(T'[n],\mu_n)$ annihilates the Galois symbol if and only if it maps $G[n]^f$ to zero, that is if and only if $g=0$, which is equivalent to  the Galois symbol being surjective. 
\end{rem}
\medskip We need one more special case that we will later apply to the Raynaud extensions of the abelian varieties $A,B$. 

\begin{prop}\label{special2} Let $G_1,G_2$ be semi-abelian varieties over $k$ such that for $i=1,2$ we have short exact sequences
$0\rightarrow T_i\rightarrow G_i\rightarrow A_i\rightarrow 0,$ where $T_i=\G_m^{\oplus r_i}$ for some $r_i\geq 0$ and $A_i$ is an abelian variety with good reduction. Then \autoref{mainsemiab} holds in this case. 
\end{prop}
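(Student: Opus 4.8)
The plan is to deduce this case from the good-reduction theorem \autoref{goodred} by comparing the Galois symbol for $(G_1,G_2)$ with the one for $(A_1,A_2)$ along the surjection $q\colon M:=G_1[n]\otimes G_2[n]\twoheadrightarrow A_1[n]\otimes A_2[n]=:M''$ induced by the quotient maps $G_i\to A_i$. First I would record the filtrations: as each $A_i$ has good reduction, $A_i[n]^t=0$ and $A_i[n]^f=A_i[n]$, so by \autoref{filsemiab} and \autoref{filsemiabdual} one gets $G_i[n]^f=G_i[n]$, $G_i[n]^t=T_i[n]\simeq\mu_n^{\oplus r_i}$ and $G_i[n]^f/G_i[n]^t\simeq A_i[n]$, while $\Hom(G_2[n],\mu_n)^t=0$ and $\Hom(G_2[n],\mu_n)^f\simeq A_2^\star[n]$. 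The two filtration conditions on a homomorphism $g\colon G_1[n]\to\Hom(G_2[n],\mu_n)$, namely $g(G_1[n]^t)=0$ and $\img(g)\subseteq\Hom(G_2[n],\mu_n)^f$, say precisely that its associated bilinear form kills $T_1[n]\otimes G_2[n]$ and $G_1[n]\otimes T_2[n]$, hence that $g$ lies in the image of the dual injection $q^\star\colon\Hom_{G_k}(M'',\mu_n)\hookrightarrow\Hom_{G_k}(M,\mu_n)$. Identifying $\Hom_{G_k}(M'',\mu_n)\simeq\Hom_{G_k}(A_1[n],A_2^\star[n])$ via the Weil pairing, the preimage $g_0$ of such a $g$ is exactly the map induced on $G_1[n]^f/G_1[n]^t\simeq A_1[n]$. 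Thus the set $P$ prescribed by \autoref{mainsemiab} equals $q^\star(P_0)$, where $P_0$ is the annihilator, furnished by \autoref{goodred}, of the image of $s_n^A\colon K(k;A_1,A_2)/n\to H^2(k,M'')$, the lifting condition on $g_0$ being the same in both descriptions.

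Next I would transport the annihilator across $q$. Covariant functoriality of the Somekawa $K$-group and of the Galois symbol gives a commutative square relating the symbols $s_n^G$ and $s_n^A$ for $(G_1,G_2)$ and $(A_1,A_2)$, whose left vertical map $K(k;G_1,G_2)/n\to K(k;A_1,A_2)/n$ is surjective because each $G_i(L)\to A_i(L)$ is onto by Hilbert 90 for the split torus $T_i$. Consequently $q_\star(\img(s_n^G))=\img(s_n^A)$, and the Tate duality compatibility of \autoref{Tateduality} gives $\langle z,q^\star(g_0)\rangle=\langle q_\star(z),g_0\rangle$ for every $g_0\in\Hom_{G_k}(M'',\mu_n)$. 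Hence $q^\star(g_0)\in\img(s_n^G)^\perp$ if and only if $g_0\in\img(s_n^A)^\perp=P_0$; in particular $P=q^\star(P_0)\subseteq\img(s_n^G)^\perp$, which is one of the two inclusions.

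The harder direction is to show $\img(s_n^G)^\perp\subseteq\img(q^\star)$, i.e. that every annihilating $h$ already kills $\ker q$. Here I would invoke \autoref{special1}: since $T_1,T_2$ are split tori, the symbols for $(T_1,G_2)$ and $(G_1,T_2)$ are surjective, so by functoriality the images of $H^2(k,T_1[n]\otimes G_2[n])$ and $H^2(k,G_1[n]\otimes T_2[n])$ in $H^2(k,M)$ lie inside $\img(s_n^G)$. As $\ker q=T_1[n]\otimes G_2[n]+G_1[n]\otimes T_2[n]$ and $H^2$ is right exact (because $\mathrm{cd}(k)=2$), the image of $j_\star\colon H^2(k,\ker q)\to H^2(k,M)$ is the sum of these two subgroups, hence contained in $\img(s_n^G)$. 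Applying the Tate duality compatibility to the inclusion $j\colon\ker q\hookrightarrow M$, for $h\in\img(s_n^G)^\perp$ we get $\langle w,j^\star(h)\rangle=\langle j_\star(w),h\rangle=0$ for all $w$, so perfectness of the pairing on $\ker q$ forces $j^\star(h)=0$, i.e. $h\in\ker(j^\star)=\img(q^\star)$. Combined with the equivalence of the previous paragraph, $h=q^\star(g_0)$ with $g_0\in P_0$, so $h\in P$.

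I expect the main obstacle to be the bookkeeping in this last step: one must pin down $\ker q$ as $T_1[n]\otimes G_2[n]+G_1[n]\otimes T_2[n]$, verify that \autoref{special1} really places the associated $H^2$-classes inside $\img(s_n^G)$ through the correct functoriality, and feed the right short exact sequence into Tate duality so that perfectness applies on $\ker q$. The good-reduction hypothesis on the $A_i$ is precisely what makes the toric–toric, toric–abelian and abelian–toric pieces unconstrained by surjectivity of $s_n$, leaving \autoref{goodred} as the sole source of conditions on the annihilator.
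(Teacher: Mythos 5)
Your proposal is correct and follows essentially the same route as the paper: it uses \autoref{special1} to show an annihilating homomorphism must kill the toric pieces, the Hilbert 90 surjection $K(k;G_1,G_2)/n\twoheadrightarrow K(k;A_1,A_2)/n$ together with Tate duality compatibility to descend to $(A_1,A_2)$, and then \autoref{goodred}. The only cosmetic difference is that you merge the paper's Steps 1 and 2 into a single argument on $\ker q$ via right-exactness of $H^2$, where the paper treats $G_1[n]\otimes T_2[n]$ and $T_1[n]\otimes G_2[n]$ separately.
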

\begin{proof} In order to see how \autoref{mainsemiab} reads in this case, we need to understand the filtrations on $G_1[n]$ and $\Hom(G_2[n],\mu_n)$. Since the abelian varieties $A_1, A_2$ have good reduction, according to \autoref{filsemiab}, for $i=1,2$ we have equalities, $A_i[n]=A_i[n]^f$, $G_i[n]=G_i[n]^f$, $G_i[n]^t=T_i[n]$. Moreover, for the Cartier dual, $\Hom(G_2[n],\mu_n)$ we have a short exact sequence of $G_k$-modules, 
\[0\rightarrow\Hom(A_2[n],\mu_n)\rightarrow\Hom(G_2[n],\mu_n)\rightarrow\Hom(T_2[n],\mu_n)\rightarrow 0.\] Notice that the exactness on the left follows by counting the orders of each $G_k$-module in the sequence. By \autoref{filsemiabdual}, we have an isomorphism, $\Hom(G_2[n],\mu_n)^f=\Hom(A_2[n],\mu_n)$, with the latter being isomorphic to $A_2^\star[n]$ by the Weil pairing. 
  This is illustrated in the following diagram, where the quotients are indicated on the edges. 

\[
	\begin{tikzcd} G_1[n] \ar[-]{d}{0} & \Hom(G_2[n],\mu_n) \ar[-]{d}{(\Z/n)^{\oplus r_2}} \\ 
	G_1[n]^f  \ar[-]{d}{A_1[n]} & \Hom(G_2[n],\mu_n)^f \ar[-]{d}{A^\star_2[n]} \\ G_1[n]^t \ar[-]{d}{\mu_n^{\oplus r_1}} & \Hom(G_2[n],\mu_n)^t\ar[-]{d}{0}\\
	0 & 0.
	\end{tikzcd}
	\] Let $g:G_1[n]\rightarrow\Hom(G_2[n],\mu_n)$ be a $G_k$-equivariant homomorphism. We need to show that $g$ annihilates the Galois symbol $K(k;G_1,G_2)/n\xrightarrow{s_n} H^2(k,G_1[n]\otimes G_2[n])$ if and only if it satisfies the following properties.\begin{enumerate}
\item $g(G_1[n])\subset \Hom(G_2[n],\mu_n)^f\simeq A_2^\star[n]$.
\item $g(G_1[n]^t)=0$. 
\item The induced homomorphism  $g:A_1[n]\rightarrow A^\star_2[n]$ lifts to a homomorphism of finite flat group schemes over $\Spec(\mathcal{O}_k)$, $\mathcal{A}_1[n]\xrightarrow{\tilde{g}}\mathcal{A}^\star_2[n]$. 
\end{enumerate}
\underline{Step 1:} We first show that property (1) provides a necessary and sufficient condition for the homomorphism $g$ to annihilate symbols of the form $\{a,b\}_{L/k}$ with $a\in G_1(L)$, $b\in T_2(L)$. 

Notice that we have a commutative diagram,
\[
	\begin{tikzcd} K(k;G_1,T_2)/n \ar{r}{s_n}\ar{d} & H^2(k,G_1[n]\otimes T_2[n])\ar{d}{\alpha} \\
	K(k;G_1,G_2)/n\ar{r}{s_n} &  H^2(k,G_1[n]\otimes  G_2[n]).
	\end{tikzcd}
	\]
This in turn gives the following diagram,

\[\label{diagr1}\xymatrix{
&  K(k;G_1,T_2)/n \ar@<-10ex>[d]^-{s_n}\;\;\;\;\;\;\;\;\;\;\;\;\;\;\;\;\;\;\;\;\;\;\;\;\; & \\
& H^2(k,G_1[n]\otimes  T_2[n])\times \Hom_{G_k}(G_1[n],\Z/n^{\oplus r_2})\ar[r]^-{\langle ,\rangle_2}\ar@<-10ex>[d]^{\alpha}&\Z/n\\
& H^2(k,G_1[n]\otimes  G_2[n])\times \Hom_{G_k}(G_1[n],\Hom(G_2[n],\mu_n))\ar[r]^-{\langle ,\rangle_1}\ar@<-10ex>[u]^{\beta}&\Z/n.
}\]
Here the map $\beta$ is induced by postcomposing with the surjection $\Hom(G_2[n],\mu_n)\twoheadrightarrow\Z/n^{\oplus r_2}$. 
	
First assume that $g$ satisfies property (1). This means exactly that $\beta(g)=0$, and hence $<\alpha(x),g>_1=0$, for every $x\in H^2(k,G_1[n]\otimes T_2[n])$. In particular $<\alpha(s_n(z)),g>_1=0$, for every $z\in K(k;G_1,T_2)/n$. The claim then follows by the above commutative diagram and the Tate duality compatibility with respect to $G_k$-homomorphisms (see \autoref{Tateduality}). 

	For the other direction, assume that $G_1[n]\rightarrow\Hom(G_2[n],\mu_n)$ is such that $<s_n(z),g>_1=0$, for every symbol $z\in K(k;G_1,G_2)$. This in particular means that $<\alpha(s_n(z')),g>_1=0$, for every $z'\in K(k;G_1,T_2)$. Using  \autoref{special1}, we get that $<\alpha(w),g>_1=0$, for every $w\in H^2(k,G_1[n]\otimes T_2[n])$.  This in turn gives 
	$<w,\beta(g)>_2=0$, for every $w\in H^2(k,G_1[n]\otimes T_2[n])$. Since the pairing $\langle ,\rangle_2$ is non-degenerate, we get that $\beta(g)=0$, which proves property (2). 
	
\underline{Step 2:} We show that property (2) provides a necessary and sufficient condition for the homomorphism $g$ to annihilate symbols of the form $\{a,b\}_{L/k}$ with $a\in T_1(L)$, $b\in G_2(L)$. 

	This can be proved quite analogously to Step 1, by considering the commutative diagram 
	\[
	\begin{tikzcd} K(k;T_1,G_2)/n \ar{r}{s_n}\ar{d} & H^2(k,T_1[n]\otimes G_2[n])\ar{d}{\gamma} \\
	K(k;G_1,G_2)/n\ar{r}{s_n} &  H^2(k,G_1[n]\otimes  G_2[n]).
	\end{tikzcd}
	\] and the Tate duality compatibility diagram
	\[\xymatrix{
&  K(k;T_1,G_2)/n \ar@<-10ex>[d]^-{s_n}\;\;\;\;\;\;\;\;\;\;\;\;\;\;\;\;\;\;\;\;\;\;\;\;\; & \\
& H^2(k,T_1[n]\otimes  G_2[n])\times \Hom_{G_k}(T_1[n],\Hom(G_2[n],\mu_n))\ar[r]^-{\langle ,\rangle_3}\ar@<-10ex>[d]^{\gamma}&\Z/n\\
& H^2(k,G_1[n]\otimes  G_2[n])\times \Hom_{G_k}(G_1[n],\Hom(G_2[n],\mu_n))\ar[r]^-{\langle ,\rangle_1}\ar@<-10ex>[u]^{\delta}&\Z/n.
}\]
	 Here the map $\delta$ is induced by precomposing with the inclusion $T_1[n]\hookrightarrow G_1[n]$.	
	
\underline{Step 3:} Let  $g:G_1[n]\rightarrow \Hom(G_2[n],\mu_n)$ be a  $G_k$-homomorphism that satisfies properties (1) and (2). We show that property (3) provides a necessary and sufficient condition for $g$ to annihilate the image of $K(k;G_1,G_2)/n$.

We proceed similarly to the previous two steps. We consider the Galois symbol,
\[K(k;A_1,A_2)/n\rightarrow H^2(k,A_1[n]\otimes A_2[n]).\] By Hilbert theorem 90, we have a surjection $G_i(L)\twoheadrightarrow A_i(L)$, for every finite extension $L$ over $k$ and for $i=1,2$. This in turn yields a surjection $K(k;G_1,G_2)/n\twoheadrightarrow K(k;A_1,A_2)/n$. In fact, we have a commutative diagram,
\[
	\begin{tikzcd} K(k;G_1,G_2)/n \ar{r}{s_n}\ar{d}{\pi} & H^2(k,G_1[n]\otimes G_2[n])\ar{d}{\epsilon} \\
	K(k;A_1,A_2)/n\ar{r}{s_n} &  H^2(k,A_1[n]\otimes  A_2[n]).
	\end{tikzcd}
	\]
Moreover, we have a Tate duality compatibility diagram 
\[\xymatrix{
& H^2(k,G_1[n]\otimes  G_2[n])\ar@<-10ex>[d]^{\epsilon}\times \Hom_{G_k}(G_1[n],\Hom(G_2[n],\mu_n))\ar[r]^-{\langle ,\rangle_1}&\Z/n\\
& H^2(k,A_1[n]\otimes  A_2[n])\times \Hom_{G_k}(A_1[n],A_2^\star[n])\ar@<-10ex>[u]^{\eta}\ar[r]^-{\langle ,\rangle_4}&\Z/n,
}\] where the map $\eta$ is induced by precomposing with $G_1[n]\rightarrow A_1[n]$ and postcomposing with $\Hom(A_2[n],\mu_n)\rightarrow\Hom(G_2[n],\mu_n)$. 
Note that a homomorphism $g:G_1[n]\rightarrow\Hom(G_2[n],\mu_n)$ satisfies properties (1) and (2) if and only if it lies in the image of $\eta$. From now on assume that this is indeed the case and we let $g=\eta(g')$ for some $g':A_1[n]\rightarrow A_2^\star[n]$. For an element $z\in K(k;G_1,G_2)$ we have, 
\[<s_n(z),g>_1=<s_n(z),\eta(g')>_1=0\Leftrightarrow<\epsilon(s_n(z)),g'>_4=0\Leftrightarrow<s_n(\pi(z)),g'>_4=0.\] 

Since the map $\pi$ is surjective, we conclude that  the homomorphism $g=\eta(g')$ annihilates the image of the K-group $K(k;G_1,G_2)/n$ in $H^2(k,G_1[n]\otimes G_2[n])$ if and only if  $g':A_1[n]\rightarrow A_2^\star[n]$ annihilates the image of the K-group $K(k;A_1,A_2)/n$ in $H^2(k,A_1[n]\otimes A_2[n])$. The conclusion then follows by the main theorem of \cite{Gazaki2017} (\autoref{goodred} in the current paper), namely that $g'$ lifts to a morphism $\tilde{g}':\mathcal{A}_1[n]\rightarrow\mathcal{A}^\star_1[n]$ of finite flat group schemes over $\Spec(\mathcal{O}_k)$.

\end{proof}
\vspace{2pt}
\subsection{The case of abelian varieties}\label{abeliancase}
In this subsection we are going to prove \autoref{mainsemiab} for abelian varieties $A,B$ over $k$ with split semistable reduction. We want to study the image of the Galois symbol, 
\[K(k;A,B)\xrightarrow{s_n}H^2(k,A[n]\otimes B[n]).\] Since we only care about the image of $s_n$, it is enough to work with the possibly larger group $(A\bigotimes^M B)(k)/n$. We first recall some facts from \cite[page 15]{Raskind/Spiess2000}. 

The Mackey functor $A$ induced by the abelian variety $A$ has a Mackey subfunctor $A^\circ$ which was already considered in \autoref{Raynaud2}. Namely, for a finite extension $L/k$ we define $A^\circ(L)=\mathcal{A}^\circ(\mathcal{O}_L)$ with the obvious norm and restriction maps.  
The quotient $A/A^\circ$ is the Mackey functor defined on finite extensions as
\[(A/A^\circ)(L):=\Phi_{A_L}=\mathcal{A}_{L, s}/\mathcal{A}_{L, s}^\circ.\] Let $\Gamma_A=\Hom(\overline{T}_A,\G_m)$  be the character group of the maximal torus $\overline{T}_A$ of $\mathcal{A}_s^\circ$  and $\Gamma^\star_A$  be the character group of  $\overline{T}_{A^\star}$. When $A$ has split semistable reduction,  the Mackey functor $A/A^\circ$ takes on a more concrete form. Namely, if $L/k$ is a finite extension then $(A/A^\circ)(L)=\Gamma_A/e_{L/k}\Gamma^\star_A$, where $e_{L/k}$ is the ramification index of the extension $L/k$. Moreover, for a tower $L\supset K\supset k$ of finite extensions, the restriction map is given by the formula,
\begin{eqnarray*}
 \res_{L/K}:& \Gamma_A/e_{K/k}\Gamma_A^\star\rightarrow \Gamma_A/e_{L/k}\Gamma_A^\star\\
& x+e_{K/k}\Gamma^\star_A\mapsto e_{L/K}x+e_{L/k}\Gamma^\star_A,
\end{eqnarray*}
while the norm map is the obvious projection, 
\[N_{L/K}:\Gamma_A/e_{L/k}\Gamma_A^\star\twoheadrightarrow \Gamma_A/e_{K/k}\Gamma_A^\star.\] More importantly, all the norm maps are surjective. The following lemma highlights the significance of this fact.

\begin{lem}\label{component1} Let $A$ be an abelian variety over $k$ with split semistable reduction and $\mathcal{F}$ a Mackey functor over $k$ such that for every $n\geq 1$ the multiplication by $n$ map $\mathcal{F}(\overline{k})\xrightarrow{n}\mathcal{F}(\overline{k})$ is surjective. Then the natural map $(A^\circ\bigotimes^M \mathcal{F})(k)/n\rightarrow (A\bigotimes^M \mathcal{F})(k)/n$ is surjective. 
\end{lem}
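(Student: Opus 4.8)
The plan is to reduce the statement to the $n$-divisibility of a single quotient Mackey product, and then to verify that divisibility generator-by-generator using the projection formula. First I would invoke the short exact sequence of Mackey functors
\[0\rightarrow A^\circ\rightarrow A\rightarrow A/A^\circ\rightarrow 0.\]
Since the Mackey product is presented as a quotient of a direct sum of ordinary tensor products, the functor $-\otimes^M\mathcal{F}$ is right exact; as kernels and cokernels in $MF_k$ are computed pointwise, evaluating at $k$ yields an exact sequence of abelian groups
\[(A^\circ\otimes^M\mathcal{F})(k)\rightarrow(A\otimes^M\mathcal{F})(k)\rightarrow((A/A^\circ)\otimes^M\mathcal{F})(k)\rightarrow 0.\]
Thus the cokernel of the left-hand map is $((A/A^\circ)\otimes^M\mathcal{F})(k)$, and the surjectivity I must prove becomes equivalent to the assertion that this cokernel is $n$-divisible, i.e. that $((A/A^\circ)\otimes^M\mathcal{F})(k)/n=0$.

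It then suffices to show that every generator $\{a,b\}_{L/k}$ of $((A/A^\circ)\otimes^M\mathcal{F})(k)$, with $a\in(A/A^\circ)(L)$ and $b\in\mathcal{F}(L)$, is divisible by $n$. Here I would combine two inputs. The first is the hypothesis on $\mathcal{F}$: writing $\mathcal{F}(\overline{k})=\varinjlim_{L'}\mathcal{F}(L')$ and using that multiplication by $n$ is surjective on $\mathcal{F}(\overline{k})$, an eventual-image argument in the colimit produces a finite extension $L''/L$ and an element $d\in\mathcal{F}(L'')$ with $\res_{L''/L}(b)=n\,d$. The second input is the structural fact recalled just above the lemma: because $A$ has split semistable reduction, all the norm maps of $A/A^\circ$ are surjective, so I may write $a=N_{L''/L}(a'')$ for some $a''\in(A/A^\circ)(L'')$.

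With these in hand the projection formula $R_1$ (as recorded in \autoref{normnotations}) gives
\[\{a,b\}_{L/k}=\{N_{L''/L}(a''),b\}_{L/k}=\{a'',\res_{L''/L}(b)\}_{L''/k}=\{a'',n\,d\}_{L''/k}=n\,\{a'',d\}_{L''/k},\]
so the chosen generator lies in $n\cdot((A/A^\circ)\otimes^M\mathcal{F})(k)$. As such generators span the group, it is $n$-divisible, completing the argument.

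I expect the main obstacle to be the mismatch of levels: the divisibility hypothesis is available only over $\overline{k}$, whereas the generators of the Mackey product live over the finite extensions $L$. Descending the divisibility of $b$ to a finite level $L''$ is routine via the colimit, but the decisive point is that one can \emph{simultaneously} raise the first coordinate $a$ from level $L$ to the same level $L''$ — and this is possible precisely because the norm maps of $A/A^\circ$ are surjective in the split semistable case. It is this surjectivity, fed into the projection formula, that couples the two coordinates at a common level and renders the generator visibly $n$-divisible.
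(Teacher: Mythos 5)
Your proposal is correct and follows essentially the same route as the paper: the short exact sequence $0\to A^\circ\to A\to A/A^\circ\to 0$, right exactness of $\otimes^M\mathcal{F}$, and then $n$-divisibility of $((A/A^\circ)\otimes^M\mathcal{F})(k)$ proved generator-by-generator by lifting the second coordinate to a finite level where it becomes $n$-divisible, using surjectivity of the norm maps of $A/A^\circ$ to match levels, and applying the projection formula. Your explicit introduction of the auxiliary level $L''$ is just a slightly more careful bookkeeping of what the paper does implicitly.
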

\begin{proof} We consider the short exact sequence of Mackey functors,
\[0\rightarrow A^\circ\rightarrow A\rightarrow A/A^\circ\rightarrow 0.\] Next, we apply the right exact functor $\bigotimes^M \mathcal{F}$ to obtain an exact sequence,
\[A^\circ\bigotimes^M \mathcal{F}\rightarrow A\bigotimes^M \mathcal{F}\rightarrow (A/A^\circ)\bigotimes^M \mathcal{F}\rightarrow 0.\] We will show that the Mackey functor $(A/A^\circ)\bigotimes^M\mathcal{F}$ is $n$-divisible for every integer $n\geq 1$. 

Let $K/k$ be a finite extension and $\{a,b\}_{L/K}$ be a generator of $((A/A^\circ)\bigotimes^M \mathcal{F})(K)$. Let $b'\in \mathcal{F}(\overline{K})$ be a point such that $nb'=b$. We consider the finite extension $L=K(b')$ over which $b'$ is defined. Since the norm map, $N_{L/K}:(A/A^\circ)(L)\rightarrow (A/A^\circ)(K)$ is surjective, we can write $a=N_{L/K}(a')$ for some $a'\in(A/A^\circ)(L)$. Using the projection formula of $((A/A^\circ)\bigotimes^M \mathcal{F})(K)$, we have an equality,
\[\{a,b\}_{L/K}=\{N_{L/K}(a'),b\}_{L/K}=\{a',\res_{L/K}(b)\}_{L/L}
=\{a',nb'\}_{L/L}=n\{a',b'\}_{L/L},\] which proves the claim.

\end{proof}

We are now ready to prove \autoref{mainsemiab} for abelian varieties with split semi-stable reduction. The theorem in this case reads as follows. 
\begin{theo}\label{mainab2} Let $k$ be a finite extension of the $p$-adic field $\Q_p$, with $p\geq 5$. Let $A,B$ be abelian varieties over $k$ with split semistable reduction. Let $B^\star$ be the dual abelian variety of $B$ and $\mathcal{A},\mathcal{B}^\star$ the N\'{e}ron models of $A,B^\star$ respectively. For a positive integer $n\geq 1$, we consider the Tate duality perfect pairing,
\[\langle ,\rangle:H^2(k,A[n]\otimes B[n])\times\Hom_{G_k}(A[n],B^\star[n])\rightarrow\Z/n.\] Then, the exact annihilator under Tate duality of the image of the Galois symbol, \[\img(K(k;A,B)\xrightarrow{s_n} H^2(k,A[n]\otimes B[n]))\]  consists of those homomorphisms $g:A[n]\rightarrow B^\star[n]$ that have the following properties.
\begin{enumerate}
\item $g$ preserves the filtration \eqref{fil}, i.e. $g(A[n]^\bullet)\subset B^\star[n]^\bullet$, where $``\bullet"=f,t$.
\item The induced map $g:A[n]^f/A[n]^t\rightarrow B^\star[n]^f/B^\star[n]^t$ lifts to a homomorphism $\tilde{g}:\mathcal{A}[n]^{\ab}\rightarrow\mathcal{B}^\star[n]^{\ab}$ of the corresponding finite flat group schemes over $\Spec(\mathcal{O}_k)$. (See \autoref{abelianquotient} for a definition of $\mathcal{A}[n]^{\ab}$). 
\end{enumerate} When the integer $n$ is coprime to $p$, the  assumption $p\geq 5$ can be dropped. 
\end{theo}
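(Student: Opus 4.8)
The plan is to deduce the statement from the semi-abelian good reduction case \autoref{special2} via the $p$-adic uniformization of \autoref{Raynaud2}. Since the Galois symbol factors through the Mackey product and only its image matters, I would work throughout with $(A\otimes^M B)(k)/n$ in place of $K(k;A,B)/n$. First I would use \autoref{component1} to replace $A,B$ by the connected N\'eron models $A^\circ,B^\circ$: applying it in each variable (the divisibility hypotheses hold because the $\overline k$-points of an abelian variety, and of a connected N\'eron model via \autoref{Raynaud2}, are $n$-divisible) shows that $(A^\circ\otimes^M B^\circ)(k)/n\to (A\otimes^M B)(k)/n$ is surjective. As the generic fibers of $\mathcal A$ and $\mathcal A^\circ$ coincide, the $G_k$-module $A^\circ[n]$ is $A[n]$ and the two Galois symbols share the target $H^2(k,A[n]\otimes B[n])$; hence $\img(s_n^{A,B})=\img(s_n^{A^\circ,B^\circ})$.

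Next I would invoke the Raynaud extensions. Writing $G_A:=A^{\sharp\circ}$, $G_B:=B^{\sharp\circ}$ for the generic fibers of the identity components, \autoref{Raynaud1} says these are semi-abelian varieties, extensions of abelian varieties $C_A,C_B$ of good reduction by split tori, so \autoref{special2} applies to $(G_A,G_B)$. The uniformization $A^\circ(L)\cong G_A(L)/\Gamma$ of \autoref{Raynaud2} gives a surjection of Mackey functors $G_A\twoheadrightarrow A^\circ$ with torsion-free kernel $\Gamma$; on $n$-torsion this induces, by \autoref{Raynaud1}(6), the inclusion $\iota\colon A[n]^f=G_A[n]\hookrightarrow A[n]$, and likewise for $B$. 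Functoriality of the Kummer maps and cup products then yields a commutative square
\[
\begin{tikzcd}
(G_A\otimes^M G_B)(k)/n \ar[r,"s_n"]\ar[d,twoheadrightarrow] & H^2(k,A[n]^f\otimes B[n]^f)\ar[d,"\iota_\star"]\\
(A^\circ\otimes^M B^\circ)(k)/n\ar[r,"s_n"] & H^2(k,A[n]\otimes B[n])
\end{tikzcd}
\]
with surjective left column. Combined with the previous step this gives the key identity $\img(s_n^{A,B})=\iota_\star\bigl(\img(s_n^{G_A,G_B})\bigr)$: the image of the Galois symbol is entirely supported on the finite parts.

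Now I would transfer the annihilator computation across $\iota$. A homomorphism $g\colon A[n]\to B^\star[n]$, regarded through the Weil pairing as an element of $\Hom_{G_k}(A[n]\otimes B[n],\mu_n)$, restricts to $\iota^\star(g)\in\Hom_{G_k}(A[n]^f\otimes B[n]^f,\mu_n)$, which by the orthogonality theorem of \autoref{orth} is exactly a map $A[n]^f\to\Hom(B[n]^f,\mu_n)\cong B^\star[n]/B^\star[n]^t$, i.e.\ the datum governed by \autoref{special2} for $(G_A,G_B)$. The compatibility of \autoref{Tateduality}, $\langle\iota_\star(w),g\rangle=\langle w,\iota^\star(g)\rangle$, together with the image identity above, shows that $g$ annihilates $\img(s_n^{A,B})$ if and only if $\iota^\star(g)$ annihilates $\img(s_n^{G_A,G_B})$. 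I would then unwind the three conditions of \autoref{special2} for $\iota^\star(g)$: landing in the finite part $C_B^\star[n]\cong B^\star[n]^f/B^\star[n]^t$ and killing the toric part $A[n]^t$ are, after pulling back along $B^\star[n]\to B^\star[n]/B^\star[n]^t$, exactly $g(A[n]^f)\subset B^\star[n]^f$ and $g(A[n]^t)\subset B^\star[n]^t$, i.e.\ condition (1); and the lifting condition on $A[n]^f/A[n]^t\to B^\star[n]^f/B^\star[n]^t$ is condition (2) once the abelian pieces are identified as $C_A[n]=\mathcal A[n]^{\ab}$ and $C_B^\star[n]=\mathcal B^\star[n]^{\ab}$.

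The hard part is the second step: establishing that $\img(s_n)$ is supported on the finite parts, i.e.\ the commutativity of the square and the identity $\img(s_n^{A,B})=\iota_\star(\img(s_n^{G_A,G_B}))$. This requires tracking precisely how the $n$-torsion shrinks from $A[n]$ to $A[n]^f$ under uniformization and checking that the Kummer maps for $G_A$ and $A^\circ$ are compatible with the surjection $G_A(L)\twoheadrightarrow A^\circ(L)$ and the inclusion $\iota$. A secondary, more self-contained point is the identification $C_B^\star[n]\cong\mathcal B^\star[n]^{\ab}$, that the abelian piece of the dual is dual to the abelian piece; this I would extract from the duality of the Raynaud extension together with the orthogonality theorem of \autoref{orth}.
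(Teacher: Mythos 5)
Your proposal follows essentially the same route as the paper's proof: reduce to $(A^\circ\otimes^M B^\circ)(k)/n$ via \autoref{component1}, lift to the generic fibers of the Raynaud extensions via the uniformization of \autoref{Raynaud2}, apply \autoref{special2}, and transfer the annihilator computation across $\iota$ using Tate-duality compatibility. The one step you flag as "the hard part" but do not carry out --- the commutativity of the square, i.e.\ that the composite $A^{\sharp\circ}(L)/n\rightarrow H^1(L,A[n]^f)\rightarrow H^1(L,A[n])$ factors through $A^\circ(L)/n$ --- is exactly what the paper supplies, by invoking the exact sequence $0\rightarrow A[n]^f\rightarrow A[n]\rightarrow\Gamma_1/n\rightarrow 0$ of \cite[Corollary 7.3]{Faltings/Chai1990} and the resulting long exact cohomology sequence.
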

\begin{proof} Let $\mathcal{A}^\sharp$, $\mathcal{B}^\sharp$ be the Raynaud extensions of $A$, $B$ respectively (see \autoref{Raynaud}). We will denote by $\mathcal{A}^{\sharp\circ}$, $\mathcal{B}^{\sharp\circ}$ the connected components of the zero element.  Let $A^{\sharp\circ},B^{\sharp\circ}$ be the generic fibers of $\mathcal{A}^{\sharp\circ}$ and $\mathcal{B}^{\sharp\circ}$ respectively. By \autoref{Raynaud1}, $\mathcal{A}^{\sharp\circ}$ and $\mathcal{B}^{\sharp\circ}$ fit into short exact sequences of commutative group schemes over $\Spec(\mathcal{O}_k)$, 
\[0\rightarrow\mathcal{T}_A\rightarrow\mathcal{A}^{\sharp\circ}\rightarrow\mathcal{C}\rightarrow 0\;\;\;\;\;\text{and }\;\;\;\;0\rightarrow\mathcal{T}_B\rightarrow\mathcal{B}^{\sharp\circ}\rightarrow\mathcal{D}\rightarrow 0,\] where $\mathcal{T}_A,\mathcal{T}_B$ are split torii and $\mathcal{C},\mathcal{D}$ are abelian schemes over $\Spec(\mathcal{O}_k)$. The generic fibers, $A^{\sharp\circ},B^{\sharp\circ}$, fit into short exact sequences of commutative groups over $k$, \[0\rightarrow\mathcal{T}_A\otimes_{\mathcal{O}_k}k\rightarrow A^{\sharp\circ}\rightarrow\mathcal{C}\otimes_{\mathcal{O}_k}k\rightarrow 0\;\;\;\;\;\text{and }\;\;\;\;0\rightarrow\mathcal{T}_B\otimes_{\mathcal{O}_k}k\rightarrow B^{\sharp\circ}\rightarrow\mathcal{D}\otimes_{\mathcal{O}_k}k\rightarrow 0.\] In particular they are semi-abelian varieties over $k$ whose maximal abelian quotient is an abelian variety with good reduction. 
Moreover, let $\Gamma_1$, $\Gamma_2$ be the discrete subgroups of $\mathcal{A}^{\sharp\circ}(k)$, $\mathcal{B}^{\sharp\circ} (k)$ as described in \autoref{Raynaud2}.
 We will show a surjection at the level  of Mackey functors, \[(A^{\sharp\circ}\bigotimes^M B^{\sharp\circ})(k)/n\stackrel{\pi}{\longrightarrow} (A\bigotimes^M B)(k)/n\longrightarrow 0.\] 
Note that by part (2) of \autoref{Raynaud2} we have a surjection 
\[(A^{\sharp\circ}\bigotimes^M B^{\sharp\circ})(k)\longrightarrow (A^\circ\bigotimes^M B^\circ)(k)\longrightarrow 0.\]
It suffices therefore to show that for every $n\geq 1$ the map 
\[(A^\circ\bigotimes^M B^\circ)(k)/n\longrightarrow(A\bigotimes^M B)(k)/n\] is surjective. This follows by applying \autoref{component1} twice. This lemma applies because the multiplication by $n$ map $A^\circ(\overline{k})\xrightarrow{n} A^\circ(\overline{k})$ (and similarly for $B^\circ$) is surjective. For, we have a surjection $A^{\sharp\circ}(\overline{k})\twoheadrightarrow A^\circ(\overline{k})$ and multiplication by $n$ is surjective on $A^{\sharp\circ}(\overline{k})$ since $A^{\sharp\circ}$ is a semi-abelian variety. 

\underline{Claim:} The above surjection is compatible with the Galois symbol, namely we have a commutative diagram,
\[
	\begin{tikzcd} (A^{\sharp\circ}\bigotimes^M B^{\sharp\circ})(k)/n \ar{r}{s_n}\ar{d}{\pi} & H^2(k,A^{\sharp\circ}[n]\otimes B^{\sharp\circ}[n])\ar{d}{\iota} \\
	(A\bigotimes^M B)(k)/n\ar{r}{s_n} &  H^2(k,A[n]\otimes  B[n]).
	\end{tikzcd}
	\]
	
First, we note that in the above diagram the map $\iota$ is induced by the inclusions $A^{\sharp\circ}[n]\subset A[n]$, $B^{\sharp\circ}[n]\subset B[n]$. The fact that we do get such inclusions follows from property (6) of \autoref{Raynaud1}. In fact, we have identifications $A^{\sharp\circ}[n]\simeq A[n]^f$ and similarly for $B$. 

In order to prove the claim, it suffices to show that for every $n\geq 1$ and for every finite extension $L/k$, the composition 
 \[
	\begin{tikzcd} A^{\sharp\circ}(L)/n\ar{r}{\delta}\ar{dr} & H^1(L,A[n]^f)\ar{d}{\iota}\\
	& H^1(L,A[n])
	\end{tikzcd}
	\]
factors through $A^\circ(L)/n$, where the map $\delta$ is the connecting homomorhism of the Kummer sequence for $A^{\sharp\circ}$ (and the analogous result for $B$).  The required commutativity follows by \cite[Corollary 7.3]{Faltings/Chai1990}. For, this corollary yields a short exact sequence of $G_k$-modules,
\[0\rightarrow A[n]^f\rightarrow A[n]\rightarrow\Gamma_1/n\rightarrow 0,\] which in turn gives rise to a long exact sequence,
\[\cdots\rightarrow\Gamma_1/n\rightarrow H^1(L,A[n]^f)\xrightarrow{\iota} H^1(L,A[n])\rightarrow\cdots\]

The remaining of the proof is easy, similar to the arguments used in \autoref{special2}. Namely, we consider the Tate duality compatibility diagram, 
\[\xymatrix{
& H^2(k,A[n]^f\otimes  B[n]^f)\ar@<-10ex>[d]^{\iota}\times \Hom_{G_k}(A[n]^f,\Hom(B[n]^f,\mu_n))\ar[r]^-{\langle ,\rangle_1}&\Z/n\\
& H^2(k,A[n]\otimes  B[n])\times \Hom_{G_k}(A[n],B^\star[n])\ar[r]^-{\langle ,\rangle_2}\ar@<-10ex>[u]^{\beta}&\Z/n.
}\]
 Let $g:A[n]\rightarrow B^\star[n]$ be a $G_k$-equivariant homomorphism and $z\in (A\otimes^M B)(k)/n$. Because the map $\pi$ is surjective, we may write $z=\pi(z')$ for some $z'\in(A^{\sharp\circ}\otimes^M B^{\sharp\circ})(k)/n$. We then get the following equivalent statements,
	\[<s_n(z),g>_2=0\Leftrightarrow<s_n(\pi(z'),g>_2=0
	\Leftrightarrow<\iota(s_n(z')),g>_2=0\Leftrightarrow<s_n(z'),\beta(g)>_1=0.\] We conclude that the homomorphism $g$ annihilates the image of $(A\otimes^M B)(k)/n$, if and only if $\beta(g)$ annihilates the image of $(A^{\sharp\circ}\otimes^M B^{\sharp\circ})(k)/n$. To finish the argument, notice that for the semi-abelian varieties $A^{\sharp\circ}$, $B^{\sharp\circ}$ we may apply \autoref{special2}. The proof is then complete, after we observe that $\beta(g)$ satisfying the conclusion of \autoref{special2} is equivalent to $g$ satisfying the conclusion of \autoref{mainab2}.

\end{proof}
\begin{rem} In the previous proof we chose to work with the Mackey product, $(A\bigotimes^M B)(k)/n$ instead of the Somekawa K-group, $K(k;A,B)/n$. The reason is that it is not clear to us whether the analytic maps $A^{\sharp\circ}/\Gamma_1\rightarrow A$ and $B^{\sharp\circ}/\Gamma_2\rightarrow B$ induce a morphism \[K(k;A^{\sharp\circ},B^{\sharp\circ})/n\rightarrow K(k;A,B)/n.\] Before dividing by $n$, the existence of such a map at the level of K-groups is highly unlikely.  When we work $mod \;n$ though it is sometimes the case that the function field relation of the Somekawa K-group is absorbed by the projection formula. (See for example \cite[Lemma 4.2.6]{Raskind/Spiess2000}.) 
\end{rem}
\vspace{2pt}
\subsection{The case of semi-abelian varieties}
In this subsection we want to prove the general case of  \autoref{mainsemiab}. Let $G_1,G_2$ be semi-abelian varieties over $k$ such that for $i=1,2$ there are short exact sequences of commutative groups over $k$,
\[0\rightarrow T_i\rightarrow G_i\rightarrow A_i\rightarrow 0,\] where $T_i=\G_m^{\oplus r_i}$ for some $r_i\geq 0$ and $A_i$ is an abelian variety with split semistable reduction. 


The proof of \autoref{mainsemiab} is quite analogous to the proofs of \autoref{special2} and \autoref{mainab2}, therefore we will only sketch the proof, omitting some details that are left to the interested reader to fill in. 

We first distinguish the following special case.

\begin{lem}\label{special3} Let $T$ be a split torus of rank $r$ and $A$ an abelian variety over $k$ with split semistable reduction. Then \autoref{mainsemiab} holds in this case. More specifically, the image of the generalized Galois symbol, $K(k;T,A)/n\xrightarrow{s_n}H^2(k,T[n]\otimes A[n])$ coincides with the image of the push-forward map
$H^2(k,T[n]\otimes A[n]^f)\xrightarrow{\iota}H^2(k,T[n]\otimes A[n])$. 
\end{lem}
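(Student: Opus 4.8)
The plan is to run the same reduction as in the proof of \autoref{mainab2}, taking advantage of the fact that here only the factor $A$ has bad reduction: the factor $T$ is a split torus, so it has good reduction and satisfies $T[n]=T[n]^f=T[n]^t$. The essential simplification is that, once $A$ is replaced by its Raynaud extension, the reduced Galois symbol is controlled by the outright surjectivity of \autoref{special1} rather than by the more delicate \autoref{special2}. Since $T\simeq\G_m^{\oplus r}$ everything splits as a direct sum over the $r$ coordinates, so I may keep $T$ general throughout. As in \autoref{mainab2}, I will work with the Mackey product $(A\otimes^M T)(k)/n$ (putting the $A$-factor first, by symmetry of $\otimes^M$), whose image under $s_n$ agrees with that of $K(k;T,A)/n$.

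First I would pass to the connected Raynaud extension $\mathcal{A}^{\sharp\circ}$ of $A$, with generic fibre $A^{\sharp\circ}$, which by \autoref{Raynaud1} sits in $0\to\mathcal{T}_A\otimes_{\mathcal{O}_k}k\to A^{\sharp\circ}\to\mathcal{C}\otimes_{\mathcal{O}_k}k\to 0$ with $\mathcal{C}\otimes_{\mathcal{O}_k}k$ an abelian variety of good reduction. The $p$-adic uniformization $A^\circ(L)\simeq A^{\sharp\circ}(L)/\Gamma_1$ of \autoref{Raynaud2} yields a surjection $(A^{\sharp\circ}\otimes^M T)(k)\twoheadrightarrow(A^\circ\otimes^M T)(k)$, and \autoref{component1} applies with $\mathcal{F}=T$---its hypothesis holds since $\G_m(\overline{k})=\overline{k}^\times$ is $n$-divisible---to give surjectivity of $(A^\circ\otimes^M T)(k)/n\to(A\otimes^M T)(k)/n$. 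Composing these produces a surjection $\pi:(A^{\sharp\circ}\otimes^M T)(k)/n\twoheadrightarrow(A\otimes^M T)(k)/n$.

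Next I would check that $\pi$ is compatible with the Galois symbol, i.e. that the square
\[
\begin{tikzcd}
(A^{\sharp\circ}\otimes^M T)(k)/n \ar{r}{s_n}\ar{d}{\pi} & H^2(k,A^{\sharp\circ}[n]\otimes T[n])\ar{d}{\iota}\\
(A\otimes^M T)(k)/n \ar{r}{s_n} & H^2(k,A[n]\otimes T[n])
\end{tikzcd}
\]
commutes, where $\iota$ comes from the identification $A^{\sharp\circ}[n]\simeq A[n]^f$ of \autoref{Raynaud1}(6) followed by $A[n]^f\hookrightarrow A[n]$. Exactly as in \autoref{mainab2}, this reduces to showing that the Kummer map $A^{\sharp\circ}(L)/n\to H^1(L,A[n]^f)\to H^1(L,A[n])$ factors through $A^\circ(L)/n$, which follows from the exact sequence $0\to A[n]^f\to A[n]\to\Gamma_1/n\to 0$ of \cite[Corollary 7.3]{Faltings/Chai1990}. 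Because $A^{\sharp\circ}$ is a semi-abelian variety whose abelian part has good reduction, \autoref{special1} makes the top $s_n$ surjective; combined with the surjectivity of $\pi$, the square forces $\img(s_n)=\img(\iota)$ along the bottom row. This is precisely the ``more specifically'' assertion of the lemma.

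It remains to match this with \autoref{mainsemiab}. Under the Weil identification $\Hom(A[n],\mu_n)\simeq A^\star[n]$, Tate-duality compatibility (see \autoref{Tateduality}) for the inclusion $T[n]\otimes A[n]^f\hookrightarrow T[n]\otimes A[n]$ shows that the orthogonal complement of $\img(\iota)$ consists of the $G_k$-homomorphisms $g:T[n]\to A^\star[n]$ whose adjoint vanishes on $T[n]\otimes A[n]^f$, i.e. with $g(T[n])\subset(A[n]^f)^\perp=A^\star[n]^t$ by the orthogonality theorem (\cite[Expos\'{e} IX, Theorem 5.2]{SGA7(I)}). Since $T[n]^f=T[n]^t=T[n]$ and $\Hom(A[n],\mu_n)^t\simeq A^\star[n]^t$, this is exactly the filtration condition of \autoref{mainsemiab}, while its abelian-piece lifting condition is vacuous because $T[n]^f/T[n]^t=0$. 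The main obstacle is the commutativity of the compatibility square---verifying that the Galois symbol descends along the Raynaud uniformization, which rests on the Faltings--Chai sequence above; once that is in place, the concluding Tate-duality bookkeeping is routine linear algebra over $\Z/n$.
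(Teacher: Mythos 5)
Your proposal is correct and follows essentially the same route as the paper's proof: reduce to the connected Raynaud extension $A^{\sharp\circ}$ via the surjection $\pi$ furnished by \autoref{Raynaud2} and \autoref{component1}, invoke \autoref{special1} for surjectivity of the symbol upstairs to conclude $\img(s_n)=\img(\iota)$, and then translate through Tate duality and the orthogonality theorem into the filtration condition $g(T[n])\subset A^\star[n]^t$ of \autoref{mainsemiab}. You supply somewhat more detail than the paper does on the commutativity of the compatibility square (via the Faltings--Chai sequence), but this is the same verification already carried out in the proof of \autoref{mainab2}.
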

\begin{proof} We can clearly reduce to the case when $r=1$. Let $A^\sharp$ be the special fiber of the Raynaud extension $\mathcal{A}^\sharp$ of $A$ and $A^{\sharp\circ}$ the special fiber of the connected component of zero, $\mathcal{A}^{\sharp\circ}$. We consider the commutative diagram,
\[
	\begin{tikzcd} 
	(\G_m\bigotimes^M A^{\sharp\circ})(k)/n\ar{r}{s_n}\ar{d}{\pi} & H^2(k,\mu_n\otimes A[n]^f)\ar{d}{\iota}\\
	(\G_m\bigotimes^M A)(k)/n\ar{r}{s_n}& H^2(k,\mu_n\otimes A[n]).
	\end{tikzcd}
	\]
By \autoref{special1} the top horizontal map is surjective. Moreover, by \autoref{component1} the map $\pi$ is also surjective and hence the second claim of the lemma follows. The first claim then follows by considering the Tate duality perfect pairing,
\[H^2(k,\mu_n\otimes A[n]^f)\times\Hom_{G_k}(\mu_n,\Hom(A[n]^f,\mu_n))\rightarrow\Z/n.\] We conclude that a $G_k$-homomorphism $g:\mu_n\rightarrow A^\star[n]$ annilates the image of the K-group $K(k;\G_m,A)$, if and only if the induced map 
\[\mu_n\xrightarrow{g} A^\star[n]\rightarrow\Hom(A[n]^f,\mu_n)\] vanishes. This is equivalent to $g(T[n])\subset A^\star[n]^t$.

\end{proof} 
\begin{rem} We note that the above computation has applications to class field theory of curves. Namely, Bloch (\cite{Bloch1981}) constructed a reciprocity map, \[\rho:V(C)\rightarrow \varprojlim\limits_{n}H^2(k, J(C)\otimes\mu_n),\] where $C$ is a smooth projective curve over $k$ with $C(k)\neq\emptyset$ and Jacobian $J(C)$. Moreover, he showed that the inverse limit, $\varprojlim\limits_{n}H^2(k,J(C)\otimes\mu_n)$, is isomorphic to the group $ T(J(C))_{G_k}$ of $G_k$-coinvariants of the Tate module, $T(J(C))$. Here $V(C)$ is a K-group which was proved by Somekawa (\cite[Theorem 2.1]{Somekawa1990}) to be isomorphic to the K-group $K(k;J(C),\G_m)$. As briefly mentioned in the introduction (example \autoref{example2}),  when $C$ has good reduction Bloch proved that the Galois symbol $s_n$ is surjective for every $n\geq 1$, or equivalently that the map $\rho$ has dense image. When $C$ has bad reduction, S. Saito showed that $s_n$ is no longer surjective and he gave a description of the cokernel (\cite{Saito1985}). More recently Yoshida (\cite{Yoshida2003}) revisited this problem and he gave a computation very analogous to \autoref{special3}. 
\end{rem}

We now proceed to a sketch of the proof of \autoref{mainsemiab}. 
\begin{proof} (Sketch)

We are only going to prove the following two special cases. The general case can be reduced to these two by analyzing $G_1$ into its toric and abelian pieces. 

\underline{Special case 1:} When $G_1=T_1$ is a split torus and $G_2$ is semi-abelian. 

In this case \autoref{mainsemiab} amounts to proving that a $G_k$-homomorphism $g:T_1[n]\rightarrow\Hom(G_2[n],\mu_n)$ annilates the image of $K(k;T_1,G_2)/n$, if and only if the induced map 
\[T_1[n]\xrightarrow{g}\Hom(G_2[n],\mu_n)\rightarrow\Hom(G_2[n]^f,\mu_n)\] vanishes. First, $g$ must annihilate the image of $K(k;T_1,T_2)/n$, or equivalently the image of $H^2(k,T_1[n]\otimes T_2[n])\rightarrow H^2(k,T_1[n]\otimes G_2[n])$. This is equivalent to the vanishing of the composition 
$T_1[n]\xrightarrow{g}\Hom(G_2[n],\mu_n)\rightarrow\Hom(T_2[n],\mu_n),$  and hence $g$ annihilates the image of $K(k;T_1,T_2)/n$ if and only if it factors as $T_1[n]\xrightarrow{g}\Hom(A_2[n],\mu_n)\simeq A_2^\star[n]$. The usual Tate compatibility diagram then allows us to reduce to the case when $G_2=A_2$ is an abelian variety with split semistable reduction, in which case the claim follows from \autoref{special3}.   

\underline{Special Case 2:} When $G_1=A_1$ is an abelian variety with split semistable reduction and $G_2$ is semi-abelian. 

First we consider the Tate duality compatibility diagram,
\[\xymatrix{
&H^2(k,A_1[n]\otimes  T_2[n])\ar@<-10ex>[d]^{\alpha}\times \Hom_{G_k}(A_1[n],\Hom(T_2[n],\mu_n))\ar[r]^-{\langle ,\rangle_2} &\Z/n\\
&H^2(k,A_1[n]\otimes  G_2[n]) \times \Hom_{G_k}(A_1[n],\Hom(G_2[n],\mu_n))\ar[r]^-{\langle ,\rangle_1}\ar@<-10ex>[u]^{\beta} &\Z/n.
}
\]
 With the usual argument and using special case 1 we see that a $G_k$-homomorphism $A_1[n]\xrightarrow{g}\Hom(G_2[n],\mu_n)$ annihilates the image of the composition $K(k;A_1,T_2)/n\rightarrow K(k;A_1,G_2)/n\rightarrow H^2(k,A_1[n]\otimes G_2[n])$ if and only if the induced homomorphism \[A_1[n]^f\hookrightarrow A_1[n]\xrightarrow{g}\Hom(G_2[n],\mu_n)\rightarrow \Hom(T_2[n],\mu_n)\]
vanishes, that is $g$ induces a $G_k$-homomorphism $A_1[n]^f\rightarrow\Hom(A_2[n]^f,\mu_n)\simeq A_2^\star[n]$. This together with the commutative diagram,
\[
	\begin{tikzcd} (A_1^{\sharp\circ}\bigotimes^M G_2)(k)/n \ar{r}{s_n}\ar{d}{\pi} & H^2(k,A_1^{\sharp\circ}[n]\otimes G_2[n])\ar{d}{\iota} \\
	(A_1\bigotimes^M G_2)(k)/n\ar{r}{s_n} &  H^2(k,A_1[n]\otimes  G_2[n]),
	\end{tikzcd}
	\] (where the map $\pi$ is surjective by \autoref{component1}), allows us to replace the abelian variety $A_1$ with the good semi-abelian variety $A_1^{\sharp\circ}$ and the semi-abelian variety $G_2$ with its abelian quotient $A_2$. 
	
	The remaining of the proof is of the same flavor. Namely, by considering the corresponding diagram 
	\[
	\begin{tikzcd} (A_1^{\sharp\circ}\bigotimes^M A_2^{\sharp\circ})(k)/n \ar{r}{s_n}\ar{d}{\pi} & H^2(k,A_1^{\sharp\circ}[n]\otimes A_2^{\sharp\circ}[n])\ar{d}{\iota} \\
	(A_1^{\sharp\circ}\bigotimes^M A_2)(k)/n\ar{r}{s_n} &  H^2(k,A_1^{\sharp\circ}[n]\otimes  A_2[n]),
	\end{tikzcd}
	\] we may reduce to the case when $G_1=A_1^{\sharp\circ}$ and $G_2=A_2^{\sharp\circ}$, which follows by \autoref{special2}.

\end{proof}
\vspace{3pt}
\section{Applications}\label{apps}
\subsection{A finiteness result} In this subsection we prove a  finiteness result, which as we shall see in the next subsection, has an important application to zero-cycles. This is the bad reduction analogue of \cite[Corollary 8.3]{Gazaki2017}. 
\begin{prop}\label{finiteness} Let $A_1,A_2$ be abelian varieties over $k$ with split semistable reduction. The map induced by the Galois symbol,
\[\varprojlim\limits_{n\geq 1} K(k;A_1,A_2)/n\stackrel{\lim s_n}{\longrightarrow}\varprojlim\limits_{n\geq 1} H^2(k,A_1[n]\otimes A_2[n])\] has finite image. 
\end{prop}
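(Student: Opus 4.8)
The plan is to convert this statement about images of Galois symbols into a statement about the annihilators computed in \autoref{mainab2}, and then to reduce the bad-reduction situation to the good-reduction finiteness result \cite[Corollary 8.3]{Gazaki2017} applied to the abelian pieces of $A_1$ and $A_2$. First I would observe that the image of $\lim s_n$ is contained in $\varprojlim_n I_n$, where $I_n := \img(s_n)\subset H^2(k,A_1[n]\otimes A_2[n])$, since each $s_n$ has image $I_n$ and the transition maps restrict to $I_m\to I_n$. Hence it suffices to show that $\varprojlim_n I_n$ is finite. Because local Tate duality is a perfect pairing, $I_n$ is exactly the orthogonal complement of the subgroup $N_n:=\{g:A_1[n]\to A_2^\star[n]\ \text{satisfying conditions (1),(2) of \autoref{mainab2}}\}$, so $I_n\cong\bigl(\Hom_{G_k}(A_1[n],A_2^\star[n])/N_n\bigr)^\vee$. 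Passing to the inverse limit and using that Pontryagin duality exchanges $\varprojlim$ of duals with the dual of the $\varinjlim$, the finiteness of $\varprojlim_n I_n$ is equivalent to the finiteness of the discrete torsion group $Q:=\varinjlim_n\bigl(\Hom_{G_k}(A_1[n],A_2^\star[n])/N_n\bigr)$.

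I would then split the inverse limit into its $\ell$-primary parts, writing $\varprojlim_n$ as the product over all primes $\ell$ of the inverse limits over $n=\ell^m$, and treat each prime separately. The key structural input is that $\varinjlim_m\Hom_{G_k}(A_1[\ell^m],A_2^\star[\ell^m])$ is cofinitely generated as a $\Z_\ell$-module, its maximal divisible subgroup being the image of $\Hom_{G_k}(T_\ell A_1,T_\ell A_2^\star)\otimes\Q_\ell/\Z_\ell$, where $\Hom_{G_k}(T_\ell A_1,T_\ell A_2^\star)$ is a finitely generated $\Z_\ell$-module. Thus $Q$ is finite precisely when the divisible part of $\varinjlim_m\Hom_{G_k}$ lands in $\varinjlim_m N_n$ up to a finite group.

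For the divisible part this is where \autoref{mainab2} and the good-reduction case enter. The crucial point is that, for split semistable reduction, the filtration \eqref{fil} on $T_\ell A_i$ is the monodromy filtration: the inertia action is unipotent, $\rho(\sigma)=\exp(t_\ell(\sigma)N)$, and the toric and finite pieces are $\img N$ and $\ker N$. Any $G_k$-equivariant homomorphism of Tate modules commutes with $N$ and therefore automatically satisfies condition (1), and it induces a $G_k$-homomorphism of the abelian pieces $\mathcal{C}_1[\ell^m]\to\mathcal{C}_2[\ell^m]$, which have good reduction. Consequently the failure of filtration-preservation at finite level contributes only non-divisible classes, and these are bounded because the toric graded pieces $A_i[\ell^m]^t\cong\mu_{\ell^m}^{\oplus r_i}$ and the étale quotients $A_i[\ell^m]/A_i[\ell^m]^f\cong(\Z/\ell^m)^{\oplus r_i}$ are governed by the fixed character lattices $X_{A_i},X_{A_i^\star}$, independent of $m$. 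What remains, namely the obstruction measured by condition (2) on the abelian pieces $\mathcal{C}_1,\mathcal{C}_2$, is exactly the lifting condition of \autoref{goodred}, and its finiteness in the inverse limit is precisely \cite[Corollary 8.3]{Gazaki2017}.

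I expect the main obstacle to be the prime $\ell=p$. For $\ell\neq p$ all group schemes in sight are étale, so condition (2) is automatic once (1) holds, and only the boundedness of the toric and étale graded contributions is needed; this yields finiteness directly, and indeed triviality for all but finitely many $\ell$. For $\ell=p$ the condition that the induced map on abelian pieces lift to finite flat group schemes over $\Spec(\mathcal{O}_k)$ is genuinely restrictive, and controlling it compatibly across the inverse system is the heart of the matter. Here I would rely entirely on the good-reduction input \cite[Corollary 8.3]{Gazaki2017} transported to $\mathcal{C}_1,\mathcal{C}_2$, taking care that the reduction to the abelian pieces, via the monodromy filtration and the bounded toric terms, is compatible with the transition maps $I_m\to I_n$ defining the inverse limit.
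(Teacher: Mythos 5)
Your overall architecture --- dualize via Tate duality, reduce to showing that the annihilator subgroup $N_n$ of \autoref{mainab2} has index bounded independently of $n$, split that bound into the two filtration-preservation conditions plus the lifting condition on the abelian pieces, and invoke the good-reduction finiteness for the last of these --- runs parallel to the paper's proof, and your treatment of $\ell\neq p$ and of the lifting condition at $p$ matches it in substance (the paper gets the uniform bound at $p$ from Bondarko's weak fullness of the generic-fibre functor, which is the input behind \cite[Corollary 8.3]{Gazaki2017}). The genuine gap is your argument for filtration-preservation at $\ell=p$. The identity $\rho(\sigma)=\exp(t_\ell(\sigma)N)$ and the description of the finite and toric pieces as $\ker N$ and $\img N$ are statements about the prime-to-$p$ Tate module; for $\ell=p$ the inertia action on $T_pA$ of a semistable abelian variety is not unipotent (already $T_pA^t\cong\Z_p(1)^{\oplus r}$ carries the cyclotomic character), so there is no operator $N$ on the Galois module itself with which a $G_k$-homomorphism could be said to commute. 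Thus the claim that condition (1) holds ``automatically up to bounded, non-divisible error'' is unsupported precisely in the case you identify as the heart of the matter, and \cite[Corollary 8.3]{Gazaki2017}, which concerns only the lifting obstruction for good-reduction abelian varieties, does not supply it.

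The paper closes this gap by a different mechanism: by Step 1 of the proof of \autoref{special2}, the failure of $g$ to satisfy $g(A_1[n]^f)\subset A_2^\star[n]^f$ is detected by pairing $g$ against the image of $H^2(k,A_1^{\sharp\circ}[n]\otimes T_2[n])\rightarrow H^2(k,A_1[n]\otimes A_2[n])$, so a uniform multiple $N_1$ killing this failure exists as soon as $\varprojlim_n H^2(k,A_1^{\sharp\circ}[n]\otimes T_2[n])$ is finite. That finiteness reduces, via $0\rightarrow T_1\rightarrow A_1^{\sharp\circ}\rightarrow C_1\rightarrow 0$, to the classical finiteness of $\varprojlim_n H^2(k,\mu_n\otimes\mu_n)$ together with S.~Saito's theorem that $\varprojlim_n H^2(k,C_1[n]\otimes\mu_n)$ is finite for $C_1$ of good reduction --- a statement valid at $p$ as well, and the key external input absent from your proposal. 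You would need either this (and its mirror for the toric part of $A_1$ against $A_2^{\sharp\circ}$) or a genuinely $p$-adic Hodge-theoretic substitute for the monodromy argument, with uniform control of denominators at finite level.
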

\begin{proof} Since $\varprojlim\limits_{n\geq 1} H^2(k,A_1[n]\otimes A_2[n])$ is a finitely generated $\hat{\Z}$-module, it suffices to show that there is some integer $N\geq 1$ such that $Ns_n=0$, for every $n\geq 1$. Equivalently, we will show that if $g:A_1[n]\rightarrow A_2^\star[n]$ is a $G_k$-homomorphism, then there is some $N\geq 1$ such that $Ng$ annihilates the image of $s_n$ and $N$ is independent of $n\geq 1$. To do this, we will follow one-by-one the steps of the proof of \autoref{special2}. 

\underline{Claim 1:} There is some $N_1\geq 1$ independent of $n$ such that the homomorphism $N_1g$ has the property  $N_1g(A_1[n]^f)\subset A_2^\star[n]^f$. 

For $i=1,2$ we consider the Raynaud extension $\mathcal{A}_i^\sharp$ of $\mathcal{A}_i$. We set $A_i^\sharp=\mathcal{A}_i^\sharp\times_{\mathcal{O}_k} k$  and we assume we have short exact sequences of commutative groups over $k$, 
\[0\rightarrow T_i\rightarrow A_i^{\sharp\circ}\rightarrow C_i\rightarrow 0,\] where as usual $T_i$ is a split torus  and $C_i$ an abelian variety with good reduction. 
Recall from Step 1 of the proof of \autoref{special2} that the property $N_1g(A_1[n]^f)\subset A_2^\star[n]^f$ is equivalent to $N_1g$ annihilating the image of  \[H^2(k,A_1^{\sharp\circ}[n]\otimes T_2[n])\rightarrow H^2(k,A_1[n]\otimes A_2[n]).\] Note that the group $\varprojlim\limits_{n\geq 1}H^2(k,A_1^{\sharp\circ}[n]\otimes T_2[n])$ is finite. For, we consider the exact sequence \[\varprojlim\limits_{n\geq 1}H^2(k,T_1[n]\otimes T_2[n])\rightarrow\varprojlim\limits_{n\geq 1}H^2(k,A_1^{\sharp\circ}[n]\otimes T_2[n])\rightarrow\varprojlim\limits_{n\geq 1}H^2(k,C_1[n]\otimes T_2[n]).\] The finiteness of $\varprojlim\limits_{n\geq 1}H^2(k,T_1[n]\otimes T_2[n])$ is classical, while $\varprojlim\limits_{n\geq 1}H^2(k,C_1[n]\otimes T_2[n])$  was proved to be finite by S. Saito (\cite[Theorem 1.1 part (2)]{Saito1985}). 
We conclude that we can take $N_1$ to be the order of $\varprojlim\limits_{n\geq 1}H^2(k,A_1^{\sharp\circ}[n]\otimes T_2[n])$. 

\underline{Claim 2:} There is some $N_2\geq 1$ independent of $n$ such that the homomorphism $N_2g$ has the property  $N_2 g(A_1[n]^t)\subset A_2^\star[n]^t$.  

The claim follows analogously to claim 1 by taking $N_2$ to be the order of the finite group $\varprojlim\limits_{n\geq 1}H^2(k,T_1[n]\otimes A_2^{\sharp\circ}[n])$. 

\underline{Claim 3:} There is some $N_3\geq 1$ independent of $n$ such that the induced homomorphism $N_1N_2N_3 g:A_1[n]^{\ab}\rightarrow A_2[n]^{\ab}$ lifts to a homomorphism $N_1N_2N_3\tilde{g}:\mathcal{C}_1\rightarrow\mathcal{C}_2$ of finite flat group schemes over $\Spec(\mathcal{O}_k)$.  

First we note that if $n$ is coprime to $p$, then $N_1 N_2 g$ has the required property. This is because every finite flat group scheme $\mathcal{G}$ over $\Spec(\mathcal{O}_k)$ of order coprime to $p$ is \'{e}tale. For more details we refer to \cite[Proposition 2.6]{Gazaki2017}. We  therefore need to show the existence of a uniform $N_3$ for $G_k$-homomorphisms $N_1 N_2 g:A_1[p^n]^{\ab}\rightarrow A_2^\star[p^n]^{\ab}$. 
The existence of such $N_3$ follows by a result of Bondarko (\cite{Bondarko2006}). In this paper Bondarko shows that the functor $\mathcal{G}\rightarrow \mathcal{G}\times_{\mathcal{O}_k}k$ that sends a finite flat group scheme of order $p^n$ for some $n\geq 1$ to its generic fiber is \textit{weakly full}. In our case this amounts to the existence of an integer $s\geq 0$ that depends only on the ramification index of the field $k$ such that $p^s N_1 N_2g$ lifts to a homomorphism $p^s N_1 N_2\tilde{g}:\mathcal{C}_1\rightarrow\mathcal{C}_2$ of finite flat group schemes over $\Spec(\mathcal{O}_k)$. 

\end{proof}

\subsection{Zero-cycles}
We close the main body of this paper with a corollary about zero-cycles. We remind the reader that for a smooth projective variety $X$ over $k$ having a $k$-rational point, the Chow group of zero-cycles, $CH_0(X)$, has a filtration \[CH_0(X)\supset A_0(X)\supset T(X),\] where $A_0(X):=\ker(\deg)$ is the kernel of the degree map, while $T(X):=\ker(A_0(X)\stackrel{\alb_X}{\longrightarrow}\Alb_X(k))$ is the kernel of the Albanese map.

When $X$ is a finite product of smooth projective curves, $X=C_1\times\cdots\times C_d$, the Albanese kernel $T(X)$ has been related to the Somekawa K-group $K(k;J(C_{i_1}),\cdots,J(C_{i_r}))$  by Raskind and Spiess in \cite[Corollary 2.4.1]{Raskind/Spiess2000}. Moreover, when $X$ is an abelian variety, $T(X)$ has been related to the K-group $K(k;X,\cdots,X)$  by the current author in \cite[Theorem 1.3]{Gazaki2015}. Using these relations, together with \autoref{finiteness} we obtain the following corollary.
\begin{cor} Let $X$ be a smooth projective variety over a finite extension $k$ of $\Q_p$ with $p\geq 5$. We assume that $X$ is  either an abelian variety with split semistable reduction, or a product $C_1\times\cdots\times C_d$ of smooth curves such that for each $i\in\{1,\cdots,d\}$, the curve $C_i$ has a $k$-rational point and the Jacobian variety $J(C_i)$ has split semistable reduction. Then for every prime $l$, the cycle map to \'{e}tale cohomology \[CH_0(X)\rightarrow H^{2d}(X_{et},\Z_l(d)),\] when restricted to the Albanese kernel of $X$ has finite image. 
\end{cor}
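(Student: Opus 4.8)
The plan is to translate the statement about the $l$-adic cycle map into a statement about generalized Galois symbols, where \autoref{finiteness} applies, and then to use the cohomological dimension of $k$ to discard all but the two-factor contributions. Throughout, $d=\dim X$ and I write $\overline{X}=X\otimes_k\overline{k}$.

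First I would pass to finite coefficients via $H^{2d}(X_{et},\Z_l(d))=\varprojlim_m H^{2d}(X_{et},\mu_{l^m}^{\otimes d})$ and study the Hochschild--Serre spectral sequence $E_2^{i,j}=H^i(k,H^j(\overline{X},\mu_{l^m}^{\otimes d}))\Rightarrow H^{i+j}(X_{et},\mu_{l^m}^{\otimes d})$. Since $k$ is $p$-adic, $G_k$ has cohomological dimension $2$, so only the rows $i=0,1,2$ survive and the abutment $H^{2d}$ carries a three-step filtration with graded pieces $H^i(k,H^{2d-i}(\overline{X},\mu_{l^m}^{\otimes d}))$. A standard analysis of this filtration shows that the piece $i=0$ records the degree and the piece $i=1$ records the Albanese image; both vanish on the Albanese kernel $T(X)$, so the image of $T(X)$ under the cycle map lands in the deepest graded piece $H^2(k,H^{2d-2}(\overline{X},\mu_{l^m}^{\otimes d}))$.

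Next I would identify this target using the relations between $T(X)$ and Somekawa K-groups. When $X=C_1\times\cdots\times C_d$ the decomposition of Raskind--Spiess (\cite[Corollary 2.4.1]{Raskind/Spiess2000}) expresses $T(X)$ as a sum of K-groups $K(k;J(C_{i_1}),\dots,J(C_{i_r}))$, and when $X$ is an abelian variety the analogous relation of \cite[Theorem 1.3]{Gazaki2015} expresses $T(X)$ through $K(k;X,\dots,X)$; in either case the $l$-adic cycle map becomes, summand by summand, the generalized Galois symbol $s_{l^m}$ landing in some $H^r(k,-)$. Here a K\"{u}nneth decomposition of $H^{2d-2}(\overline{X})$ intervenes: for a product of curves the only components seen by $T(X)$ are those with exactly two $H^1$-factors, the remaining factors being $H^2(\overline{C}_k)=\Z_l(-1)$, so that up to a Tate twist each such piece is $T_lJ(C_i)\otimes T_lJ(C_j)$, and the abelian-variety case gives the analogous two-factor pieces built from $T_lX$. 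At this point the cohomological dimension bound is decisive: for $r\geq 3$ the target $H^r(k,-)$ vanishes, so every Somekawa K-group with three or more entries contributes nothing to the cycle map, and only the two-factor groups $K(k;J(C_i),J(C_j))$ (resp.\ the two-factor pieces of $K(k;X,\dots,X)$) remain.

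Finally I would invoke \autoref{finiteness}. Passing to the inverse limit over $m$, the image of each surviving two-factor piece is contained in the image of $\varprojlim_m K(k;A_1,A_2)/l^m\to\varprojlim_m H^2(k,A_1[l^m]\otimes A_2[l^m])$, with $A_1,A_2$ the relevant Jacobians or copies of $X$ (all of split semistable reduction by hypothesis), and this image is finite by \autoref{finiteness}. As there are only finitely many such pairs, the total image of $T(X)$ is finite, which is the assertion. The main obstacle I anticipate is not the finiteness input but the bookkeeping of the second step: verifying that under the Raskind--Spiess and Gazaki identifications the $l$-adic cycle map coincides graded piece by graded piece with the Galois symbol, and confirming that $T(X)$ is carried precisely into the $H^2(k,-)$ row so that the higher-factor K-groups may legitimately be ignored.
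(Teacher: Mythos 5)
Your argument is correct and takes essentially the same route as the paper: the paper's proof is a one-line citation of \autoref{finiteness} together with \cite[Proposition 2.4, Lemma 2.5]{Yamazaki2005} and \cite[Section 6]{Gazaki2015}, and those references supply exactly the compatibility you reconstruct by hand via Hochschild--Serre and K\"{u}nneth, namely that on the Albanese kernel the $l$-adic cycle map is computed summand-by-summand by the Galois symbols on the Somekawa K-groups. Your use of $\mathrm{cd}(G_k)=2$ to discard the summands with three or more factors and the final appeal to \autoref{finiteness} on the two-factor pieces match the paper's intended argument.
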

\begin{proof} This follows directly from \autoref{finiteness} together with \cite[Proposition 2.4, Lemma 2.5]{Yamazaki2005} for the case of a product of curves and \cite[Section 6]{Gazaki2015} for the case of an abelian variety. 

\end{proof}

\vspace{3pt}
\appendix
\section{Extending the Result of Bloch}
In this appendix we extend \cite[Theorem 2.9]{Bloch1981} to every abelian variety with good reduction. Our proof follows essentially  the same steps as Bloch. 
\begin{theo}\label{Bloch}  Let $A$ be an abelian variety with good reduction over a finite extension $k$ of $\Q_p$. For every $n\geq 1$, the generalized Galois symbol,
\[K(k;A,\G_m)/n\xrightarrow{s_{n}} H^2(k,A[n]\otimes\mu_n)\simeq A[n]_{G_k}\] is surjective. In particular the induced map, $\varprojlim\limits_{n\geq 1}K(k;A,\G_m)/n\rightarrow T(A)_{G_k}$ is surjective. 
\end{theo}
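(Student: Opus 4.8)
The plan is to prove surjectivity by showing that the symbols $\{a,\pi\}_{k/k}$, with $a\in A(k)$ and $\pi$ a fixed uniformizer of $k$, already exhaust the target, after reducing the whole computation to the residue field. Recall that the Galois symbol sends such a generator to $\delta(a)\cup[\pi]$, where $\delta\colon A(k)/n\to H^1(k,A[n])$ is the Kummer map and $[\pi]\in H^1(k,\mu_n)=k^\times/n$ is the class of $\pi$. Since $A$ has good reduction, the module $A[n]$ is unramified, and I would first record the standard local-duality fact that cup product with the ramified class $[\pi]$ restricts to an isomorphism $H^1_{\mathrm{ur}}(k,A[n])\xrightarrow{\;\cong\;}H^2(k,A[n]\otimes\mu_n)$; here $H^1_{\mathrm{ur}}(k,A[n])=H^1(\kappa,A[n])$ denotes the unramified classes, and the identification of the target with $A[n]_{G_k}$ is the Tate-duality computation already quoted in the statement. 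Thus it suffices to show that $\delta(A(k)/n)$ meets every unramified class modulo the kernel of $\cup[\pi]$.

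The bridge to the residue field is Lang's theorem. Since $\overline A$ is connected over the finite field $\kappa$, one has $H^1(\kappa,\overline A)=0$, so the Kummer sequence over $\kappa$ yields $\overline A(\kappa)/n\cong H^1(\kappa,\overline A[n])=H^1_{\mathrm{ur}}(k,A[n])$; moreover the reduction map $A(k)=\mathcal A(\mathcal O_k)\twoheadrightarrow\overline A(\kappa)$ is surjective by smoothness of the N\'eron model. When $n$ is prime to $p$ this finishes the argument immediately: the $n$-division of an integral point is \'etale over $\mathcal O_k$, so $\delta(a)$ is unramified for every $a\in A(k)$, whence $\delta(A(k)/n)=H^1_{\mathrm{ur}}(k,A[n])$ and the symbols $\{a,\pi\}$ already surject onto $H^2(k,A[n]\otimes\mu_n)$.

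The remaining, and genuinely hard, case is $p\mid n$, where $\delta(a)$ is no longer unramified. Here everything comes down to showing that the formal-group part is annihilated, namely that $\delta(a)\cup[\pi]=0$ for every $a\in\widehat A(\mathfrak m)$ lying in the kernel of reduction. Granting this vanishing, the composite $\cup[\pi]\circ\delta$ factors through $A(k)/n\twoheadrightarrow\overline A(\kappa)/n\cong H^1_{\mathrm{ur}}(k,A[n])$ and is therefore onto, proving surjectivity uniformly in $n$. Establishing the vanishing of these formal symbols is the main obstacle, and it is exactly the explicit formal-group computation of Bloch: one exploits that the $p$-power division points of points in $\widehat A(\mathfrak m)$ generate totally ramified extensions, so that their cup product with the ramified class $[\pi]$ dies. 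The task is then to carry out this computation for an arbitrary abelian variety with good reduction in place of a Jacobian, following Bloch step by step. Finally, the $\varprojlim$ statement follows formally: the finite groups $H^2(k,A[n]\otimes\mu_n)=A[n]_{G_k}$ form a surjective inverse system with limit $T(A)_{G_k}$, so surjectivity at each finite level passes to the limit by Mittag--Leffler.
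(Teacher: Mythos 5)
Your argument works for $(n,p)=1$, and there it is arguably more direct than the paper's, but it collapses precisely where the theorem has content, namely when $p\mid n$. The first problem is the opening assertion that good reduction makes $A[n]$ unramified: N\'eron--Ogg--Shafarevich gives this only for $n$ prime to $p$. On $p$-power torsion inertia acts nontrivially through the formal group (for good ordinary reduction $A[p^m]$ contains an unramified twist of $\mu_{p^m}$), so $H^1_{\mathrm{ur}}(k,A[p^m])$ is computed from the strictly smaller module $A[p^m]^{I_k}$ and need no longer have the same order as $H^2(k,A[p^m]\otimes\mu_{p^m})\simeq A[p^m]_{G_k}$; cup product with $[\pi]$ cannot then restrict to an isomorphism. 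The second, decisive problem is that the vanishing you propose to ``grant,'' namely $\delta(a)\cup[\pi]=0$ for all $a\in\widehat{A}(\mathfrak{m})$, is false in general: in the ordinary case $\widehat{A}$ contains copies of $\widehat{\G}_m$ after an unramified base change, and your claim specializes to the vanishing of the Hilbert symbol $(u,\pi)$ for every principal unit $u$, which fails whenever $\mu_p\subset k^\times$. The heuristic that two totally ramified classes must cup to zero is not valid. Once this is lost, a counting argument dooms the whole strategy: if the formal-group symbols did die, the image of $a\mapsto\delta(a)\cup[\pi]$ would be a quotient of $\overline{A}(\kappa)/n$, which is in general strictly smaller than $A[n]_{G_k}$, so the symbols $\{a,\pi\}_{k/k}$ with a single fixed second entry cannot exhaust $H^2(k,A[n]\otimes\mu_n)$. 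The unit entries $b\in\mathcal{O}_k^\times$ paired against formal-group points are exactly what produce the missing classes, so restricting to $b=\pi$ discards the essential part of the image rather than a negligible one.

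The paper's proof avoids all of this by dualizing instead of exhibiting generators: after a harmless base change to arrange $\mu_n\subset k^\times$, a nonzero $c\in A^\star[n](k)$ is converted via the connecting map $\Hom_{G_k}(A[n],\mu_n)\to\Ext^1_k(A,\mu_n)$ into an extension $0\to\mu_n\to B\to A\to 0$, and the nondegeneracy of the Hilbert symbol (over all of $k^\times/k^{\times n}$, not just $\pi$) reduces surjectivity to the statement that $B(k)\to A(k)$ is not surjective, which is Bloch's formal-group lemma. To salvage your approach you would have to replace $H^1_{\mathrm{ur}}$ by the actual image of the Kummer map (the flat classes) and prove the corresponding self-annihilation statement under Tate duality --- which is essentially the content you are trying to assume.
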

\begin{proof} We fix an integer $n\geq 1$ and we want to prove that the Galois symbol,
\[K(k;A,\G_m)/n\xrightarrow{s_n}A[n]_{G_k}\] is surjective. 
 First we observe that it suffices to show surjectivity after a finite base change. For, if $k'$ is a finite extension of $k$, we have a commutative diagram,
\[
	\begin{tikzcd} 
	K(k';A,\G_m)/n\ar{r}{s_{n}}\ar{d}{N_{k'/k}} & A[n]_{G_{k'}}\ar{d}{N_{k'/k}}\\
	K(k;A,\G_m)/n\ar{r}{s_{n}} & A[n]_{G_{k}}.
	\end{tikzcd}
	\] The norm map $N_{k'/k}:A[n]_{G_{k'}}\rightarrow A[n]_{G}$ is surjective, since it maps coinvariants of the smaller group to coinvariants of the larger group. Therefore, if the top horizontal map is surjective, so is the bottom one. 
	
	From now on, we may therefore assume that $\mu_n\subset k^\times$ and we fix a non-canonical isomorphism $i:\mu_n\xrightarrow{\simeq}\Z/n$. We consider the Weil pairing,
\[\langle ,\rangle_W:A[n]\times A^\star[n]\rightarrow\mu_n.\] If we restrict to the subgroup $A^\star[n](k)$, we obtain a perfect pairing,
\[\langle ,\rangle_W:A[n]_{G_k}\times A^\star[n](k)\rightarrow\mu_n.\] In order to show that the map, $K(k;A,\G_m)/n\xrightarrow{s_{n}} A[n]_{G_k}$ is surjective, it suffices to show that its exact annihilator under the pairing $\langle ,\rangle_W$ is zero. That means, it suffices to show that for every non-trivial $c\in A^\star[n](k)$, there exists some $x\in K(k;A,\G_m)$ with the property $<s_{n}(x), c>_W\neq 0$.
	
Using the non-canonical isomorphism $i:\mu_n\simeq\Z/n$, we get an isomorphism $H^2(k,\mu_n\otimes\mu_n)\simeq H^2(k,\mu_n)\simeq\Z/n$.  Thinking of the element $c\in A^\star[n](k)$ as a $G_k$-homomorphism $A[n]\xrightarrow{c}\mu_n$, we can therefore see that we have an equality,
\[i(<s_{n}(x), c>_W)=c_\star(s_n(x))\in H^2(k,\mu_n\otimes\mu_n),\] where  $c_\star:H^2(k,A[n]\otimes\mu_n)\rightarrow H^2(k,\mu_n\otimes\mu_n)$ is the induced map on cohomology. 

Next we consider the connecting homomorphisms of the Kummer sequences for $A$ and $\G_m$ respectively, $\delta_1:A(k)\rightarrow H^1(k,A[n])$ and $\delta_2:k^{\times }\rightarrow H^1(k,\mu_n)$. Using the definition of the Galois symbol $s_n$, the above isomorphisms and the Merkurjev-Suslin theorem, it is enough to show the following. 

\underline{Goal:} There exists some $a\in A(k)$ and $b\in k^\times$ such that $\{c_{\star}(\delta_1(a)),b\}\neq 0\in K_2^M(k)/n$. 

Since we are allowed to choose the element $b$ freely in $k^\times/k^{\times n}$, and we know that the Hilbert symbol is non-degenerate, the goal is reduced to proving that the composition 
$A(k)\xrightarrow{\delta_1} H^1(k,A[n])\xrightarrow{c_\star}H^1(k,\mu_n)$ is non zero. To do this, we consider a different realization of the group $A^\star[n](k)$. Namely, we consider the connecting homomorphism, 
\[\Hom_{G_k}(A[n],\mu_n)\xrightarrow{\varepsilon}\Ext_k^1(A,\mu_n).\] We see that the class $\varepsilon(c)$ gives rise to an extension $B\in\Ext_k^1(A,\mu_n)$, \[0\rightarrow\mu_n\rightarrow B\xrightarrow{f} A\rightarrow 0.\] We then have a commutative diagram,
\[
	\begin{tikzcd} 
	A(k)\ar{r}{n}\ar{d} & A(k)\ar{d}{=}\ar{r}{\delta_1} & H^1(k,A[n])\ar{d}{c_\star} \ar{r} &\cdots\\
	B(k)\ar{r}{f} & A(k)\ar{r}{\eta} & H^1(k,\mu_n)\ar{r} &\cdots.
	\end{tikzcd}
	\]
We conclude that it suffices to show that the connecting homomorphism  $\eta$ is non trivial, or equivalently that the map $B(k)\xrightarrow{f} A(k)$ is not surjective. This was already proved by Bloch in \cite[page 248]{Bloch1981}. Note that the assumption that $A$ has good reduction is crucial here and the claim is far from being true in the bad reduction case. 

\end{proof}

\vspace{10pt}

\bibliographystyle{amsalpha}

\bibliography{bibfile}

\end{document}